\title[Minimal Hypersurfaces with Large Area and Morse Index]{On the Existence of Minimal Hypersurfaces with Arbitrarily Large Area and Morse Index}
\author{Yangyang Li}
\address{Department of Mathematics, Princeton University, Princeton, NJ 08544}
\email{yl15@math.princeton.edu}
\thanks{The author was partially supported by NSF-DMS-1811840.}
\newtheorem{theorem}{Theorem}
\newtheorem{remark}{Remark}
\newtheorem{lemma}{Lemma}[section]
\newtheorem*{claim}{Claim}
\newcommand{\set}[1]{\{#1\}}
\begin{document}
\bibliographystyle{abbrvalpha}

\begin{abstract}
    We show that a bumpy closed Riemannian manifold $(M^{n+1}, g)$ $(3 \leq n+1 \leq 7)$ admits a sequence of connected closed embedded two-sided minimal hypersurfaces whose areas and Morse indices both tend to infinity. This improves a previous result by O. Chodosh and C. Mantoulidis \cite{chodosh_minimal_2019} on connected minimal hypersurfaces with arbitrarily large area.
\end{abstract}
\maketitle

\section{introduction}

    The Almgren-Pitts min-max theory originated from the work of F.J. Almgren Jr. \cite{almgren_homotopy_1962, almgren_theory_1965} and J. Pitts \cite{pitts_existence_1981} (using a deep regularity result in higher dimensions by Schoen-Simon \cite{schoen_regularity_1981}), recently greatly developed by F.C. Marques and A. Neves \cite{marques_min-max_2014, marques_morse_2016, marques_existence_2017, marques_morse_2018-1}, has presented the abundance of minimal hypersurfaces. In particular, in a closed Riemannian manifold with dimension between $3$ and $7$, by localizing the method in \cite{marques_existence_2017}, A. Song \cite{song_existence_2018} proved that there are infinitely many geometrically distinct embedded minimal hypersurfaces, resolving Yau's conjecture \cite{10.2307/j.ctt1bd6kkq.37}. However, a caveat of this result is the lack of geometric or topological information of these hypersurfaces.

    For generic metrics, the conjecture was proved by Irie-Marques-Neves \cite{irie_density_2018} ($3 \leq n+1 \leq 7$) and the author \cite{li_existence_2019} ($n+1 \geq 8$), using Weyl law developed by Liokumovich-Marques-Neves \cite{liokumovich_weyl_2018}. Moreover, the former result also revealed the denseness of minimal hypersurfaces, and later, Marques-Neves-Song \cite{marques_equidistribution_2017} gave a quantified version of the density, i.e., equidistribution.

    In this paper, we shall focus on the generic case, more precisely, the bumpy metric case. Recall that a closed Riemannian manifold $(M^{n+1}, g)$ is called \textit{bumpy}, if any immersed closed minimal hypersurface is non-degenerate, i.e., has no non-trivial Jacobi field. B. White \cite{white_bumpy_2017} (See also Theorem 9 in \cite{ambrozio_compactness_2017}) has shown that the space of bumpy metrics on $M$ is a residual subset of the space of $C^\infty$ Riemannian metrics on $M$. 

    In a bumpy Riemannian manifold, due to non-degeneracy, minimal hypersurfaces from min-max theory are expected to have better properties. More precisely, in Allen-Cahn min-max setting (proposed by M. Guaraco \cite{guaraco_minmax_2018}, developed in Gaspar-Guaraco \cite{gaspar_weyl_2018}), O. Chodosh and C. Mantoulidis \cite{chodosh_minimal_2018} proved multiplicity one conjecture ($n+1=3$), and more recently, in Almgren-Pitts setting, X. Zhou \cite{zhou_multiplicity_2019}, using the prescribed mean curvature min-max theory developed in Zhou-Zhu \cite{zhou_existence_2018}, confirmed Marques-Neves multiplicity one conjecture ($3\leq n+1 \leq 7$). These results combined with the work of Marques-Neves \cite{marques_morse_2018-1} on the Morse index of $p$-width minimal hypersurfaces lead to:

    \begin{theorem}[\cite{marques_morse_2018-1}, Theorem 8.4]\label{thm:generic}
        On a bumpy closed Riemannian manifold $(M^{n+1}, g)$ $(3 \leq n+1 \leq 7)$, for each $p\in \mathbb{N}^+$, there exists a smooth, closed, embedded, multiplicity one, two-sided, minimal hypersurface $\Sigma_p$ such that
        \begin{equation}
            \omega_p(M, g) = \mathrm{Area}_g(\Sigma_p)\,, \quad \mathrm{index}(\Sigma_p) =p\,,
        \end{equation}
        and
        \begin{equation}
            \lim_{p\rightarrow \infty}\frac{\mathrm{Area}_g(\Sigma_p)}{p^{\frac{1}{n+1}}} = c(n) \mathrm{vol}(M, g)^{\frac{n}{n+1}}\,,
        \end{equation}
        where $\omega_p(M,g)$ is the volume spectrum, more precisely, the min-max $p$-width, and $c(n) > 0$ is a dimensional constant from the Weyl law in \cite{liokumovich_weyl_2018}.
    \end{theorem}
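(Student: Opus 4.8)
The plan is to assemble Theorem~\ref{thm:generic} from the Almgren--Pitts construction of the volume spectrum together with the cited advances, rather than to reprove any of them; the real content is checking that the hypotheses line up. Recall that $\omega_p(M,g)$ is the $p$-width, i.e.\ the min-max value over $p$-sweepouts of the space $\Z_n(M;\Zn_2)$ of mod-$2$ flat $n$-cycles, where a $p$-sweepout is a map detecting the $p$-th cup power of the generator of $H^1(\Z_n(M;\Zn_2);\Zn_2)$. First I would invoke the Almgren--Pitts existence theory with the Schoen--Simon \cite{schoen_regularity_1981} regularity theorem: because $3\le n+1\le 7$, $\omega_p(M,g)$ is attained by a stationary integral varifold $V_p$ whose support is a smooth, closed, embedded minimal hypersurface, a priori carrying integer multiplicities and possibly with one-sided components, so that $\omega_p(M,g)=\sum_j m_j\,\mathrm{Area}_g(\Sigma_p^j)$.

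Next I would feed in the multiplicity one theorem of X.~Zhou \cite{zhou_multiplicity_2019} (via the prescribed-mean-curvature min-max of Zhou--Zhu \cite{zhou_existence_2018}, valid for bumpy metrics with $3\le n+1\le 7$; when $n+1=3$ the Allen--Cahn argument of Chodosh--Mantoulidis \cite{chodosh_minimal_2018} gives the same conclusion): for a bumpy metric the min-max varifold may be chosen with every multiplicity $m_j=1$ and every component two-sided. Hence $\Sigma_p:=\operatorname{supp}V_p$ is a smooth, closed, embedded, two-sided minimal hypersurface with $\omega_p(M,g)=\mathrm{Area}_g(\Sigma_p)$. I then apply the Morse index estimates of Marques--Neves \cite{marques_morse_2016,marques_morse_2018-1}: the min-max upper bound gives $\mathrm{index}(\Sigma_p)\le p$, and the lower bound --- which uses bumpiness to force the nullity to vanish, together with multiplicity one --- gives $\mathrm{index}(\Sigma_p)\ge p$, so $\mathrm{index}(\Sigma_p)=p$. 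Finally, the Weyl law of Liokumovich--Marques--Neves \cite{liokumovich_weyl_2018} reads $\omega_p(M,g)=c(n)\,\mathrm{vol}(M,g)^{\frac{n}{n+1}}p^{\frac{1}{n+1}}+o(p^{\frac{1}{n+1}})$ as $p\to\infty$; dividing by $p^{1/(n+1)}$ and letting $p\to\infty$ gives the displayed limit, and since $\mathrm{Area}_g(\Sigma_p)=\omega_p(M,g)$ it also holds for the areas.

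I expect the one genuinely delicate point, hence the main obstacle, to be the index \emph{equality} $\mathrm{index}(\Sigma_p)=p$ rather than merely the bound $\mathrm{index}(\Sigma_p)\le p$. The lower bound is the hard half of \cite{marques_morse_2018-1}: it rests on the deformation theory of sweepouts, which interprets the $p$ parameters as producing $p$ linearly independent area-decreasing directions at $\Sigma_p$, and it needs care when consecutive widths coincide ($\omega_{p-1}=\omega_p$ or $\omega_p=\omega_{p+1}$), where one must identify at which cohomological level $\Sigma_p$ is genuinely detected. It is exactly here that multiplicity one is indispensable: without it the Morse index of $\operatorname{supp}V_p$ is only tied to a weighted count involving the $m_j$, so Zhou's theorem is precisely what upgrades the index bound to an equality and delivers the full statement.
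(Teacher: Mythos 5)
The paper does not prove Theorem~\ref{thm:generic}; it is quoted verbatim as Theorem~8.4 of Marques--Neves \cite{marques_morse_2018-1}, obtained there by combining their index bounds with the multiplicity one theorem of Zhou \cite{zhou_multiplicity_2019} (and of Chodosh--Mantoulidis \cite{chodosh_minimal_2018} in dimension three). Your proposal is therefore not being compared against a proof in this paper but against the cited literature, and as a reconstruction of that argument it is accurate: Almgren--Pitts existence with Schoen--Simon regularity gives a smooth min-max hypersurface with integer multiplicities, bumpiness plus Zhou upgrades this to multiplicity one and two-sidedness, the two halves of the Marques--Neves index estimate yield $\mathrm{index}(\Sigma_p)=p$, and the Weyl law of \cite{liokumovich_weyl_2018} gives the asymptotic. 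You also correctly single out the index lower bound as the delicate step and correctly explain the role of multiplicity one there. One small structural remark: the way the index equality is actually achieved in \cite{marques_morse_2018-1} --- and the way it is replicated in the present paper's Theorem~\ref{thm:conf_minmax} --- proceeds through perturbing the metric so that the finitely many relevant minimal hypersurfaces have $\mathbb{Q}$-linearly independent areas, applying the multiplicity one and index theorems for the perturbed metrics, and then passing to the limit using non-degeneracy; your phrasing in terms of ``$p$ linearly independent area-decreasing directions'' captures the heuristic but not this specific perturbation mechanism. This is a difference of exposition rather than a gap.
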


    However, $\Sigma_p$ here might have multiple disjoint components, so it is not obvious whether or not one could find a connected minimal hypersurface with either large Morse index or large area from the theorem. Note that in a manifold with Frankel property, for example, with positive Ricci curvature, $\Sigma_p$ will have only one connected component, so the answer will be trivially yes.

    On the Morse index side, by monotonicity formula, we know that each component has a uniform area lower bound and therefore, the pigeon hole principle will directly imply that for each $\Sigma_p\,$, there exists at least one component with Morse index at least proportional to $p^{n/(n+1)}\,$. Nevertheless, such a component might not have large area.

    On the area side, O. Chodosh and C. Mantoulidis \cite{chodosh_minimal_2019} showed that there exists a sequence of connected minimal hypersurfaces whose areas tend to infinity. 
    \begin{theorem}[Theorem 1.4, \cite{chodosh_minimal_2019}]\label{thm:chodosh-mantoulidis}
        On a bumpy closed Riemannian manifold $(M^{n+1}, g)$ $(3 \leq n+1 \leq 7)\,$, either:
        \begin{itemize}
            \item there exists a sequence of connected closed embedded stable minimal hypersurfaces $\Sigma_j$ with $\mathrm{Area}(\Sigma_j) \rightarrow \infty\,$, or,
            \item the hypersurfaces $\Sigma_p$ from Theorem \ref{thm:generic} have at least one connected component $\Sigma'_p$ with $\mathrm{Area}(\Sigma'_p)\geq Cp^{1/(n+1)}$ for some $C = C(M, g) > 0$ independent of $p\,$.
        \end{itemize}
    \end{theorem}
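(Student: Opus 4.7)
The plan is to argue by contradiction: assume both alternatives fail simultaneously and derive a violation of the Weyl-law asymptote in Theorem~\ref{thm:generic}.

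\emph{Structure of the stable components.} If the first alternative fails, the areas of all connected closed embedded stable minimal hypersurfaces of $(M,g)$ are uniformly bounded by some $A_0 < \infty$. The bumpy hypothesis forces every such hypersurface to be non-degenerate, so Sharp-type compactness yields only finitely many of them, say $\Sigma^{(1)}, \ldots, \Sigma^{(L)}$. Since $\Sigma_p$ from Theorem~\ref{thm:generic} is embedded and of multiplicity one, its stable components are pairwise disjoint copies of distinct $\Sigma^{(i)}$'s; their total area is at most $S_0 := \sum_{i=1}^L \mathrm{Area}_g(\Sigma^{(i)})$, a constant independent of $p$.

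\emph{Counting the unstable components.} Assuming the second alternative also fails, extract $p_k \to \infty$ along which every component of $\Sigma_{p_k}$ has area at most $\delta_k p_k^{1/(n+1)}$ with $\delta_k \to 0$. Theorem~\ref{thm:generic} gives $\mathrm{Area}_g(\Sigma_{p_k}) \sim c(n)\,\mathrm{vol}(M,g)^{n/(n+1)}\,p_k^{1/(n+1)}$, so after subtracting the bounded stable contribution the unstable components still carry total area of order $p_k^{1/(n+1)}$. I would then invoke an index-by-area bound of the form $\mathrm{index}(\Sigma') \leq C(M,g)\,\mathrm{Area}_g(\Sigma')$ for any closed embedded minimal hypersurface $\Sigma' \subset M$ --- well understood for minimal surfaces in $3$-manifolds via Chodosh-Ketover-Maximo-style arguments, and any polynomial bound with exponent strictly less than $n$ will suffice. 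Each unstable component of $\Sigma_{p_k}$ then has index at most $C\delta_k p_k^{1/(n+1)}$, and since these indices sum to $p_k$, the number of unstable components is at least $p_k^{n/(n+1)}/(C\delta_k)$. The monotonicity formula provides a uniform area lower bound $\epsilon_0 > 0$ per component, so the total unstable area is at least $\epsilon_0 p_k^{n/(n+1)}/(C\delta_k)$, which against the upper bound of order $p_k^{1/(n+1)}$ forces $\delta_k \gtrsim p_k^{(n-1)/(n+1)}$. Since $n \geq 2$ this diverges with $k$, contradicting $\delta_k \to 0$.

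\emph{Principal obstacle.} The crucial input is the index-versus-area bound for closed embedded minimal hypersurfaces in the range $3 \leq n+1 \leq 7$. It is robust in the surface case ($n=2$) via conformal-invariance arguments, but its extension to higher dimensions is delicate and may require curvature-estimate techniques or extensions of the Ambrozio-Carlotto-Sharp compactness framework. As a fallback, one could attempt a sweepout-separation approach --- enclosing the components of $\Sigma_{p_k}$ in pairwise disjoint tubular neighbourhoods and using them to build disjoint sub-sweepouts whose total parameter count exceeds what the Weyl law allows for $\omega_{p_k}$ --- but this avenue requires significantly more technical bookkeeping.
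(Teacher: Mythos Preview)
The paper does not prove this theorem; it is quoted as Theorem~1.4 of \cite{chodosh_minimal_2019}. However, the essential ingredient of the Chodosh--Mantoulidis argument is reproduced in the paper as Lemma~\ref{lem:num} (following their Lemma~2.2 and Claim~2.3), and from it the proof of Theorem~\ref{thm:chodosh-mantoulidis} is immediate and quite different from your proposal. If the first alternative fails, bumpiness and Sharp compactness give only finitely many connected closed embedded two-sided stable minimal hypersurfaces in $(M,g)$, say $L$ of them. The counting lemma then says that \emph{any} disjoint family of connected closed embedded two-sided minimal hypersurfaces in $M$ has at most $2L+1$ members, because between the pieces one can minimise in homology to produce stable hypersurfaces, and each stable hypersurface is used at most twice. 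Applied to the components of $\Sigma_p$, this bounds the number of components by $2L+1$ uniformly in $p$; since their areas sum to $\omega_p(M,g)\sim c(n)\,\mathrm{vol}(M,g)^{n/(n+1)}p^{1/(n+1)}$, pigeonhole yields a component of area at least $C\,p^{1/(n+1)}$ with $C=C(M,g)$.

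Your approach has a genuine gap at exactly the point you flag. The inequality $\mathrm{index}(\Sigma')\leq C(M,g)\,\mathrm{Area}_g(\Sigma')$ (or any polynomial bound in area alone) is \emph{not} known for closed embedded minimal hypersurfaces in dimensions $4\leq n+1\leq 7$; the three-dimensional arguments you allude to rely on conformal structure or on controlling topology in terms of area, neither of which is available here. Without such a bound the counting of unstable components via their indices collapses, and your contradiction does not close. The actual mechanism, by contrast, never touches the index: it bounds the \emph{number} of components directly via the scarcity of stable hypersurfaces, and then uses only the total-area asymptotics. This is both simpler and dimension-agnostic within the range $3\leq n+1\leq 7$.
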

    Nonetheless, even in the latter case, they couldn't get a lower bound on the Morse index on these components. Hence, it is natural to raise the following question: \\

\noindent\textbf{Q}: \textit{Does any bumpy closed Riemannian manifold admit a connected minimal hypersurface with both arbitrarily large area and Morse index?}\\

    In this paper, we shall give a positive answer to this question.

    \begin{theorem}[Main Theorem]\label{thm:main}
        Given any bumpy closed Riemannian manifold $(M^{n+1}, g)$ $(3 \leq n+1 \leq 7)$, there exists a sequence of connected closed embedded two-sided minimal hypersurfaces $\set{\Gamma_i}$ such that
        \begin{equation}
            \lim_{i \rightarrow \infty} \mathrm{Area}(\Gamma_i) = +\infty\,, \quad \lim_{i \rightarrow \infty} \mathrm{index}(\Gamma_i) = +\infty\,.
        \end{equation}
    \end{theorem}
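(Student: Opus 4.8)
The plan is to combine the two alternatives in Theorem \ref{thm:chodosh-mantoulidis} by showing that *either* alternative leads to the conclusion — but the first alternative produces *stable* hypersurfaces, which have Morse index $0$, so that case cannot be used directly. Hence the real work is to rule out having to rely on the stable case, or rather to extract large-index connected pieces from the $p$-width hypersurfaces $\Sigma_p$ in *all* situations. So I would instead argue as follows. Suppose, for contradiction, that there is a uniform bound $\mathrm{index}(\Gamma) \leq I_0$ on every connected closed embedded two-sided minimal hypersurface $\Gamma$ with $\mathrm{Area}(\Gamma) > A_0$, for all large $A_0$ — i.e., large area forces bounded index. The goal is to derive a contradiction with the Weyl-law growth $\omega_p \sim c(n)\mathrm{vol}(M,g)^{n/(n+1)} p^{1/(n+1)}$ from Theorem \ref{thm:generic}.

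**The key steps, in order.** First, write $\Sigma_p = \sum_k m_{p,k} \Sigma_{p,k}$ with multiplicities; by Zhou's multiplicity one theorem (invoked in Theorem \ref{thm:generic}) all $m_{p,k} = 1$, and $\sum_k \mathrm{index}(\Sigma_{p,k}) = \mathrm{index}(\Sigma_p) = p$. Second, by the monotonicity formula there is a uniform lower area bound $\mathrm{Area}(\Sigma_{p,k}) \geq \varepsilon_0 = \varepsilon_0(M,g) > 0$, so the number $N_p$ of components satisfies $N_p \leq \mathrm{Area}(\Sigma_p)/\varepsilon_0 \leq C p^{1/(n+1)}$. Third — the crucial dichotomy for each component — split the components of $\Sigma_p$ into those with area $\leq A_0$ (``small'') and those with area $> A_0$ (``large''). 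By the assumed index bound, each large component contributes index $\leq I_0$, so the large components contribute total index $\leq I_0 N_p \leq C I_0 p^{1/(n+1)}$. Fourth, handle the small components: by the Sharp--Ambrozio--Carlotto--Sharp-type compactness theorem (``Theorem 9 in \cite{ambrozio_compactness_2017}'', cited in the excerpt), in a bumpy metric the set of closed embedded minimal hypersurfaces with area $\leq A_0$ and index $\leq I$ is *finite* for each fixed $I$; in particular there is a uniform bound $\mathrm{index}(\Sigma') \leq I_1(A_0)$ on every connected minimal hypersurface $\Sigma'$ with $\mathrm{Area}(\Sigma') \leq A_0$ — otherwise one produces infinitely many such hypersurfaces with area $\leq A_0$ and index $\leq I_1 - 1 < \infty$ after passing to a subsequence, contradicting finiteness (one uses the genus/area bound and the fact that a sequence of index $\le I_1$ minimal hypersurfaces of bounded area subconverges with multiplicity, and in the bumpy case the limit must coincide with one of them). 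Hence each small component has index $\leq I_1(A_0)$, and the small components contribute total index $\leq I_1(A_0) N_p \leq C I_1(A_0) p^{1/(n+1)}$. Fifth, add up: $p = \mathrm{index}(\Sigma_p) \leq C(I_0 + I_1(A_0)) p^{1/(n+1)}$, which fails for $p$ large since $1/(n+1) < 1$. This contradiction shows that for every $A_0$ and every $I_0$ there is a connected closed embedded two-sided minimal hypersurface with area $> A_0$ and index $> I_0$; a standard diagonal argument then produces the desired sequence $\Gamma_i$ with both area and index tending to infinity.

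**The main obstacle** I expect is Step 4 — upgrading the *finiteness* of minimal hypersurfaces with bounded area and bounded index (in a bumpy metric) to a *uniform index bound* for hypersurfaces of bounded area, and making the compactness argument rigorous for two-sided embedded minimal hypersurfaces in dimensions $3 \leq n+1 \leq 7$. The Schoen--Simon / Sharp curvature estimates and the smooth compactness of bounded-index, bounded-area minimal hypersurfaces are exactly what is needed, but one must be careful that convergence is smooth and graphical (no concentration of topology, no multiplicity in the bumpy nondegenerate case), so that a would-be infinite sequence of distinct such hypersurfaces genuinely contradicts bumpiness. A secondary technical point is confirming that the components $\Sigma_{p,k}$ are genuinely two-sided (so that ``index'' is the usual Jacobi-operator index) — this is part of the multiplicity one package in Theorem \ref{thm:generic}, and orientability/two-sidedness in an orientable ambient manifold is automatic, while in the non-orientable case one passes to the two-sided ones, which is where the monotonicity-based counting still applies. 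Everything else — the monotonicity area lower bound, the pigeonhole counting, the Weyl asymptotics — is routine once Step 4 is in place.
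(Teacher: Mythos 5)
The strategy hinges on Step 4, and that step contains a genuine logical gap. You claim that because the set of closed embedded minimal hypersurfaces with $\mathrm{Area} \leq A_0$ \emph{and} $\mathrm{index} \leq I$ is finite for each fixed $I$, there must exist a uniform bound $I_1(A_0)$ on the index of \emph{all} hypersurfaces with $\mathrm{Area} \leq A_0$. This does not follow: ``finite for each $I$'' is perfectly consistent with having infinitely many hypersurfaces of bounded area whose indices tend to infinity, one for each value of $I$. Your parenthetical justification tries to apply Sharp-type compactness to a sequence with unbounded index, but that compactness theorem requires an index bound as a hypothesis precisely because unbounded index permits more and more curvature-concentration points, so no subsequential smooth (or even multiplicity-controlled) limit is available and the ``coincides with one of finitely many'' argument cannot get started. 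To my knowledge, ``bounded area implies bounded index in a bumpy metric'' is not a theorem, and nothing in the cited references supplies it. Without Step 4 the contradiction $p \leq C(I_0 + I_1(A_0))\,p^{1/(n+1)}$ never materializes, and the proof collapses.

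This is, in fact, exactly the obstruction that the paper is designed to circumvent, and it is why Chodosh--Mantoulidis could only get an area lower bound and not an index lower bound in the second alternative of Theorem~\ref{thm:chodosh-mantoulidis}. The paper's route is different: it cuts $M$ along all two-sided stable minimal hypersurfaces (\`a la Song) to obtain ``core'' manifolds $(N_j,\partial N_j)$, develops a confined multiplicity-one, index-$= p$ min-max theory (Theorem~\ref{thm:conf_minmax}) inside those cores, and then exploits two structural facts specific to the cores rather than to $M$: a \emph{lower} area bound for every interior minimal hypersurface in $N_{j_p}$ in terms of the largest boundary component (Lemma~\ref{lem:area}), and a combinatorial count of disjoint components by counting the stable hypersurfaces they separate (Lemma~\ref{lem:num}). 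Pigeonhole then forces a single component to carry large index, with no appeal to a ``bounded area $\Rightarrow$ bounded index'' principle. If you want to salvage your strategy you would need to either establish such a principle independently (nontrivial and likely false in general), or replace Step 4 with counting arguments tied to stable hypersurfaces, which essentially reconstructs the paper's approach.
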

    \begin{remark}
        When $n + 1 = 3$, many examples of minimal surfaces with bounded topology but without Morse index bounds or area bounds have been constructed in pursuit of answering a question raised by J. Pitts and J. Rubinstein \cite{pittsApplicationsMinimaxMinimal1987}, i.e., whether embedded minimal surfaces of fixed genus in a three-manifold have bounded Morse index. For any closed $3$-manifold, T. Colding and W. Minicozzi \cite{coldingExamplesEmbeddedMinimal1999} constructed a (bumpy) metric admitting stable minimal tori without area bounds. Later, Colding-Hingston \cite{coldingMetricsMorseIndex2003} and Hass-Norbury-Rubinstein \cite{hassMinimalSpheresArbitrarily2003} constructed metrics admitting minimal tori and minimal spheres without Morse index bounds, respectively.
    \end{remark}

    Moreover, the same proof also gives a better description on the growth rate of areas and Morse indices in the latter case of Theorem \ref{thm:chodosh-mantoulidis}, i.e.,
    \begin{theorem}
        In a bumpy closed Riemannian manifold $(M^{n+1}, g)$, if there are at most finitely many stable minimal hypersurfaces, then there exists a sequence of connected minimal hypersurfaces $\Gamma_p$ such that
        \begin{equation}
            \mathrm{index}(\Gamma_p) = p\,,
        \end{equation}
        and their areas satisfy either
        \begin{equation}
            \lim_{p \rightarrow \infty} \frac{\mathrm{Area}(\Gamma_p)}{p^{1/ (n+1)}} = c(n)\,,
        \end{equation}
        or 
        \begin{equation}
            \lim_{p \rightarrow \infty} \frac{\mathrm{Area}(\Gamma_p)}{p} = c_0(M, g) > 0\,.
        \end{equation}
    \end{theorem}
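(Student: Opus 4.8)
The plan is to extract the hypersurfaces $\Gamma_p$ from the connected components of the Marques--Neves hypersurfaces of Theorem~\ref{thm:generic}, and to read off the two possible growth rates from the way the Morse index is distributed among those components. For $q \in \mathbb{N}^+$ write $\Sigma_q = \bigsqcup_{k=1}^{N_q} \Sigma_q^k$ for the decomposition into connected components, so $\sum_k \mathrm{index}(\Sigma_q^k) = q$ and $\sum_k \mathrm{Area}(\Sigma_q^k) = \omega_q(M,g)$. Two ingredients will be used throughout. First, the monotonicity formula gives $\nu = \nu(M,g) > 0$ with $\mathrm{Area}(\Gamma) \ge \nu$ for every closed minimal hypersurface, so $N_q \le \omega_q(M,g)/\nu \le C q^{1/(n+1)}$ by the Weyl law in Theorem~\ref{thm:generic}. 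Second, Sharp's compactness theorem combined with bumpiness --- a sheeted limit of multiplicity $\ge 2$ would produce a nontrivial positive Jacobi field on the limit --- shows that for every $A, I$ there are only finitely many connected closed embedded minimal hypersurfaces with area $\le A$ and index $\le I$; since there are, by hypothesis, only finitely many stable minimal hypersurfaces, the union of the stable components of $\Sigma_q$ has area bounded by some $B_0 = B_0(M,g)$ and contributes $0$ to the index. Finally, the hypothesis rules out the first alternative in Theorem~\ref{thm:chodosh-mantoulidis}, so $\Sigma_q$ always has a component of area $\ge C q^{1/(n+1)}$.

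Then I would choose, for each $q$, a component $\Gamma^{(q)} \subseteq \Sigma_q$ of largest Morse index. Pigeonholing $q$ over the at most $N_q \le C q^{1/(n+1)}$ components gives $\mathrm{index}(\Gamma^{(q)}) \ge q/N_q \ge c\, q^{n/(n+1)} \to \infty$, while $\nu \le \mathrm{Area}(\Gamma^{(q)}) \le \omega_q(M,g) \le C q^{1/(n+1)}$ automatically. The decisive point is then a dichotomy: either \textbf{(i)} there is a sequence $q_j \to \infty$ along which a single component of $\Sigma_{q_j}$ carries $(1-o(1))$ of the area $\omega_{q_j}$ \emph{and} $(1-o(1))$ of the index $q_j$; or \textbf{(ii)} no such sequence exists, so along every subsequence a fixed positive proportion of the index is carried by components whose areas stay uniformly bounded. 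In case (i), letting $\Gamma_p$ be such a component with $p := \mathrm{index}(\Gamma_p)$, the Weyl law immediately yields $\mathrm{Area}(\Gamma_p)/p^{1/(n+1)} \to c(n)$. In case (ii), the relevant bounded-area components are mutually disjoint and pairwise distinct; feeding this into the finiteness statement above, together with Sharp's compactness and the curvature estimates for minimal hypersurfaces of bounded index, one extracts along a subsequence a single minimal hypersurface $S$ out of which these components are built by a controlled ``stacking'' degeneration whose area and index both grow linearly in the number of sheets, and so $\mathrm{Area}(\Gamma_p)/p \to c_0(M,g) := \mathrm{Area}(S) > 0$.

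I expect case (ii) to be the hard part: one has to show that the ``fragmented'' alternative really produces the sharp linear law $\mathrm{Area}(\Gamma_p)/p \to c_0 > 0$, with $c_0$ the area of an honest minimal hypersurface, rather than only the two-sided estimate $\mathrm{Area}(\Gamma_p) \asymp p$. This is exactly where bumpiness has to be invoked to exclude a sequence of connected minimal hypersurfaces with bounded area and index tending to infinity but no limiting geometric structure, and it is also the place where the two alternatives in the statement become genuinely mutually exclusive. A minor additional point, if one insists that every sufficiently large value of the index actually occur, is to interpolate between consecutive min-max widths to realize the intermediate indices, which affects neither asymptotic.
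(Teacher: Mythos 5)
Your approach is genuinely different from the paper's, and it has a real gap precisely where you flagged it. The paper does not attempt to extract the dichotomy from how the components of the Marques--Neves hypersurfaces $\Sigma_q$ distribute their area and index. Instead it runs the dichotomy on the number $I$ of two-sided stable minimal hypersurfaces in $M$. If $I=0$, then $M$ has the Frankel property for two-sided minimal hypersurfaces (any disjoint pair would yield a stable hypersurface by area minimization), so the $\Sigma_p$ in Theorem~\ref{thm:generic} is already connected and $\mathrm{Area}(\Gamma_p)/p^{1/(n+1)}\to c(n)\,\mathrm{vol}(M,g)^{n/(n+1)}$ falls out of the Weyl law with $\mathrm{index}(\Gamma_p)=p$ exactly. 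If $1\le I<\infty$, the inductive cutting procedure terminates after finitely many steps in a core $(N,\partial N)$ whose interior has the Frankel property (Lemma 11 in Song); Theorem~\ref{thm:conf_minmax} then produces, for every $p$, a single connected $\Gamma_p$ in the interior of $N$ with $\mathrm{index}(\Gamma_p)=p$ and $\mathrm{Area}(\Gamma_p)=\omega_p(\mathcal{C}(N))$, and the linear law is simply Song's estimate $p\cdot\mathrm{Area}(\Sigma_1)\le\omega_p(\mathcal{C}(N))\le p\cdot\mathrm{Area}(\Sigma_1)+\hat C p^{1/(n+1)}$, giving $c_0(M,g)=\mathrm{Area}(\Sigma_1)$, the area of the largest boundary component. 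The linear growth is built into the geometry of the cylindrical ends of $\mathcal{C}(N)$ and has nothing to do with any index-concentration phenomenon among the components of $\Sigma_q$.

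The concrete problems with your case (ii): the components $\Sigma_q^k$ are pairwise disjoint, multiplicity-one, embedded hypersurfaces in a fixed bumpy metric, not sheeted covers of a fixed surface, so there is no ``stacking degeneration'' available and no reason a renormalized limit $S$ exists; Sharp's compactness requires a simultaneous area \emph{and} index bound, and you are exactly in the regime where the index bound is absent, so you cannot extract a geometric limit; and even granting some compactness, nothing forces the area-to-index ratio of the fragmented components to converge, let alone to the area of a single minimal hypersurface. Your proposal also only constructs $\Gamma_p$ along a subsequence of indices $p=\mathrm{index}(\Gamma^{(q_j)})$, whereas the statement asserts $\mathrm{index}(\Gamma_p)=p$ for every $p$; your closing remark about ``interpolating between consecutive min-max widths'' is not a construction, and in the paper this equality comes for free from Marques--Neves (Theorem 8.2 of \cite{marques_morse_2018-1}) applied to each $p$ separately, not from pigeonholing. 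You should instead observe that the hypothesis of finitely many stable hypersurfaces forces the inductive construction of the core $N$ to terminate, and then read the two asymptotics directly off the two possible shapes of the width: $\omega_p(M,g)\sim c(n)\mathrm{vol}(M)^{n/(n+1)}p^{1/(n+1)}$ when $\partial N=\emptyset$, and $\omega_p(\mathcal{C}(N))\sim\mathrm{Area}(\Sigma_1)\,p$ otherwise.
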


    The idea is based on an adaptation of X. Zhou's multiplicity one theorem \cite{zhou_multiplicity_2019} and Marques-Neves Morse index theorem \cite{marques_morse_2018-1} to a ``core'' manifold introduced by A. Song \cite{song_existence_2018} in the bumpy metric case, which will be described in Section 2. In Section 3, we will divide the proof into three cases based on the number of closed two-sided stable minimal hypersurfaces (none, finitely many and infinitely many), and then apply the adaptive theorems case by case, combined with several counting arguments, to conclude our results.\\

\noindent\textbf{Notation:} In the following, we shall use both $\Sigma$ and $\Gamma$ to denote minimal hypersurfaces, where $\Sigma$ is usually a boundary component of a ``core'' manifold while $\Gamma$ lies inside the interior.

\section*{Acknowledgements}
    
    I would like to thank my advisor Fernando Cod\'a Marques for his constant support and his patient guidance.

\section{Multiplicity and Morse Index for Confined Min-max Closed Minimal Hypersurfaces}

    In this section, we will only consider a Riemannian manifold with boundary $(N, \partial N)\,$, which is a submanifold of the bumpy manifold $(M, g)$, where its boundary $\partial N = \bigcup^m_{i = 1} \Sigma_i$ is a finite union of connected closed embedded two-sided stable minimal hypersurfaces in $M$. 

    \subsection{A. Song's Construction of a Non-compact Manifold with Cylindrical Ends}

        In this subsection, for completeness, we give a brief description of cylindrical ends attached to the minimal hypersurface boundary and their properties (Subection 2.2 in \cite{song_existence_2018}).

        Since the metric of $M$ is bumpy, we know that $\Sigma_i$'s are non-degenerately stable and have a contracting neighborhood in $M$, i.e., there exists a diffeomorphism
        \begin{equation}
            \Phi: \partial N \times [0, \hat t] \rightarrow N\,,
        \end{equation}
        where $\Phi(\partial N \times \set{0}) = \partial N$ is the identity map and for all $t \in [0, \hat t]$, $\Phi(\partial N \times \set{t})$ has non-zero mean curvature vector pointing towards $\partial N$, i.e., has positive mean curvature w.r.t. the direction pointing outwards $\partial N$.

        As in A. Song's paper \cite{song_existence_2018}, we define the corresponding non-compact manifold with cylindrical ends:
        \begin{equation}
            \mathcal{C}(N): = N \cup_\varphi (\partial N \times [0, \infty))\,,
        \end{equation}
        where $\varphi: \partial N \times \set{0} \rightarrow \partial N$ is the canonical identification. The Lipschitz continuous metric $h$ on $\mathcal{C}(N)$ will be given by
        \begin{equation}
            h|_N = g|_N\,, \quad h|_{\partial N \times [0, \infty)} = g|_{\partial N} \oplus \mathrm{d}s^2\,. 
        \end{equation}
        
        Now, for $\varepsilon \in (0, \hat t)\,$, let $N_\varepsilon = N \backslash \Phi(\partial N \times [0, \varepsilon))$ whose boundary is $\partial N_\varepsilon = {\Phi(\partial N \times \set{\varepsilon})}\,$. Due to compactness, there exists a positive number $\hat d > 0$ independent of $\varepsilon$ such that we can define the Fermi coordinates on a $\hat d$-neighborhood of $\partial N_\varepsilon$ as follows:
        \begin{equation}
            \begin{aligned}
                \gamma_\varepsilon: \partial N_\varepsilon \times [- \hat d, \hat d] \rightarrow M\,,\\
                \gamma_\varepsilon(x, t) = \mathrm{exp}(x, t \nu)\,,
            \end{aligned}
        \end{equation}
        where $\mathrm{exp}$ is the exponential map and $\nu$ is the inward unit normal of $\partial N_\varepsilon$. In this Fermi coordinate, $g$ can be written as $g^\varepsilon_t \oplus \mathrm{d}t^2\,$.

        One could choose $\delta_\varepsilon > 0$ small enough so that
        \begin{itemize}
            \item $\lim_{\varepsilon \rightarrow 0} \delta_\varepsilon = 0\,$;
            \item $\gamma_\varepsilon(\partial N_\varepsilon \times [0, \delta_\varepsilon]) \subset \Phi(\partial N \times [\varepsilon, \hat t])\,$;
            \item for all $t \in [0, \delta_\varepsilon]$, $\gamma_\varepsilon(\partial N_\varepsilon \times \set{t})$ has positive mean curvature as well.
        \end{itemize}
        
        One could also find a smooth function $\vartheta_\varepsilon: [0, \delta_\varepsilon] \rightarrow \mathbb{R}$ and a constant $z_\varepsilon \in (0, \delta_\varepsilon)$ satisfying:
        \begin{itemize}
            \item $\vartheta_\varepsilon \geq 1$ and $\frac{\partial}{\partial t} \vartheta_\varepsilon \leq 0\,$;
            \item $\vartheta_\varepsilon \equiv 1$ in a neighborhood of $\delta_\varepsilon$;
            \item $\vartheta_\varepsilon$ is a constant on $[0, z_\varepsilon]\,$;
            \item $\lim_{\varepsilon \rightarrow 0} \int_{[0, \delta_\varepsilon]} \,\vartheta_\varepsilon = \infty\,$;
            \item $\lim_{\varepsilon \rightarrow 0} \int_{[z_\varepsilon, \delta_\varepsilon]} \,\vartheta_\varepsilon = 0\,$.
        \end{itemize}
        
        Abusing the notation, on $N_\varepsilon\,$, let $\vartheta_\varepsilon(\gamma_\varepsilon(x, t)):= \vartheta_\varepsilon(t)$ for all $(x, t) \in \partial N_\varepsilon \times [0, \delta_\varepsilon]\,$.
        
        Now, one could define a new metric $h_\varepsilon$ on $N_\varepsilon\,$:
        \begin{equation}\label{eqn:he}
            h_\varepsilon(q) = \begin{cases}
                g_t(q) \oplus (\vartheta_\varepsilon(q) \mathrm{d}t)^2 & q \in \gamma_\varepsilon(\partial N_\varepsilon \times [0, \delta_\varepsilon])\\
                g(q) & q \in N_\varepsilon\backslash \gamma_\varepsilon(\partial N_\varepsilon \times [0, \delta_\varepsilon])
            \end{cases}.
        \end{equation}

        In the following, we always assume that $N_\varepsilon$ is endowed with the metric $h_\varepsilon\,$.

        Here are some properties on the convergence from $(N_\varepsilon, h_\varepsilon)$ to $(\mathcal{C}(N), h)\,$.

        \begin{lemma}[Lemma 4, \cite{song_existence_2018}]\label{lem:4}
            For $t \in [0, \delta_\varepsilon]\,$, the slices $\gamma_\varepsilon(\partial N_\varepsilon \times \set{t})$ satisfy
            \begin{enumerate}
                \item they have non-zero mean curvature vector pointing in the direction of $-\gamma_{\varepsilon*}\left(\frac{\partial}{\partial t}\right)\,$;
                \item their mean curvature goes uniformly to $0$ as $\varepsilon$ converges to $0\,$;
                \item their second fundamental form is bounded by a constant $C$ independent of $\varepsilon\,$.
            \end{enumerate}
        \end{lemma}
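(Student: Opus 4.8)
The plan is to verify the three assertions directly from the explicit form of the metric $h_\varepsilon$ in \eqref{eqn:he} and the fact that, in the Fermi coordinates $\gamma_\varepsilon$, the original metric $g$ is written as $g^\varepsilon_t \oplus \mathrm{d}t^2$. First I would record the first-variation / mean-curvature formula for the family of hypersurfaces $\set{t = \mathrm{const}}$ in a warped-type metric $g^\varepsilon_t \oplus (\vartheta_\varepsilon\,\mathrm{d}t)^2$: if $H^g(t)$ denotes the mean curvature of $\gamma_\varepsilon(\partial N_\varepsilon \times \set t)$ with respect to the unit normal $\gamma_{\varepsilon*}(\partial/\partial t)$ in the metric $g$, then with respect to $h_\varepsilon$ the mean curvature of the same slice is $\vartheta_\varepsilon(t)^{-1} H^g(t)$ (the conformal-in-the-normal-direction rescaling only dilates the normal, so it rescales the scalar second fundamental form and hence $H$ by $\vartheta_\varepsilon^{-1}$, while the tangential metric $g^\varepsilon_t$ is untouched). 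Once this identity is in hand, all three items reduce to statements about $H^g(t)$ and the second fundamental form of the slices in $(M,g)$, together with the bounds $\vartheta_\varepsilon \geq 1$.

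For item (1): by construction of $\delta_\varepsilon$ (third bullet in the choice of $\delta_\varepsilon$), for every $t \in [0,\delta_\varepsilon]$ the slice $\gamma_\varepsilon(\partial N_\varepsilon \times \set t)$ has positive mean curvature with respect to $\gamma_{\varepsilon*}(\partial/\partial t)$ in $(M,g)$, i.e. $H^g(t) > 0$, so the mean curvature vector points in the direction of $-\gamma_{\varepsilon*}(\partial/\partial t)$; since $\vartheta_\varepsilon^{-1} > 0$, the same sign persists for $h_\varepsilon$. For item (2): one uses that $\partial N_\varepsilon = \Phi(\partial N \times \set\varepsilon)$ converges smoothly to $\partial N$, which is minimal, as $\varepsilon \to 0$; hence $\sup_{t \in [0,\delta_\varepsilon]} H^g(t) \to 0$ uniformly (the slices $\gamma_\varepsilon(\partial N_\varepsilon \times \set t)$ with $t \in [0,\delta_\varepsilon]$ all lie in $\Phi(\partial N \times [\varepsilon,\hat t])$, a region shrinking to $\partial N$, on which one has uniform $C^2$ control), and $\vartheta_\varepsilon^{-1}(t) \leq 1$, so the $h_\varepsilon$-mean curvature is bounded by $\sup_t H^g(t) \to 0$. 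For item (3): the second fundamental form of $\gamma_\varepsilon(\partial N_\varepsilon \times \set t)$ in $(M,g)$ is uniformly bounded by compactness of $N$ (Fermi coordinates of fixed width $\hat d$, with uniformly bounded geometry); passing to $h_\varepsilon$, the normal direction is rescaled by $\vartheta_\varepsilon \geq 1$, so each component of the scalar second fundamental form is divided by $\vartheta_\varepsilon$, hence does not increase, and $g^\varepsilon_t$ is unchanged — so the $h_\varepsilon$-second fundamental form is bounded by the same constant $C$, independent of $\varepsilon$.

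I would present the conformal rescaling computation in local coordinates adapted to the slices (tangential coordinates $x$ on $\partial N_\varepsilon$ plus the normal coordinate $t$), where $h_\varepsilon = g^\varepsilon_t + \vartheta_\varepsilon(t)^2\,\mathrm{d}t^2$ and the $h_\varepsilon$-unit normal to $\set{t=\mathrm{const}}$ is $\vartheta_\varepsilon^{-1}\partial/\partial t$; then $A^{h_\varepsilon}_{ij} = \tfrac12 h_\varepsilon\big(\nabla^{h_\varepsilon}_{\partial_i}(\vartheta_\varepsilon^{-1}\partial_t),\partial_j\big)$ unwinds to $\vartheta_\varepsilon^{-1}$ times the corresponding $g$-quantity plus terms involving $\partial_i \vartheta_\varepsilon = 0$ (since $\vartheta_\varepsilon$ depends only on $t$), giving cleanly $A^{h_\varepsilon} = \vartheta_\varepsilon^{-1} A^g$ as bilinear forms on the slice and $H^{h_\varepsilon} = \vartheta_\varepsilon^{-1} H^g$. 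The main obstacle — really the only delicate point — is item (2): one must make sure the convergence $H^g(t) \to 0$ is uniform in $t \in [0,\delta_\varepsilon]$ as well as in $x$, which is why it matters that $\delta_\varepsilon$ was chosen with $\gamma_\varepsilon(\partial N_\varepsilon \times [0,\delta_\varepsilon]) \subset \Phi(\partial N \times [\varepsilon,\hat t])$ and $\delta_\varepsilon \to 0$; this confines all the relevant slices to an arbitrarily small collar of the minimal hypersurface $\partial N$, on which the mean curvature of the foliation $\Phi(\partial N \times \set\cdot)$ is uniformly small. Everything else is a routine consequence of compactness of $N$ and the sign/monotonicity properties of $\vartheta_\varepsilon$.
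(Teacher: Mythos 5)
Your argument is correct, and it follows the natural (and essentially only) route: compute the second fundamental form of the slices under the metric change $g = g^\varepsilon_t \oplus \mathrm{d}t^2 \mapsto h_\varepsilon = g^\varepsilon_t \oplus (\vartheta_\varepsilon\,\mathrm{d}t)^2$. The key identity $A^{h_\varepsilon} = \vartheta_\varepsilon^{-1} A^g$ (hence $H^{h_\varepsilon} = \vartheta_\varepsilon^{-1} H^g$, $|A^{h_\varepsilon}| = \vartheta_\varepsilon^{-1}|A^g|$) is correct; one sees it immediately from $A_{ij} = -\frac12 \langle \nu, \nabla_{\partial_i}\partial_j\rangle$ with $\nu^{h_\varepsilon} = \vartheta_\varepsilon^{-1}\partial_t$, the Koszul formula with the Lie brackets of coordinate fields vanishing and the block-diagonal form of the metric, plus the fact that $\vartheta_\varepsilon$ depends only on $t$. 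All three claims then follow as you say from $\vartheta_\varepsilon \geq 1$, positivity of $H^g$, and compactness. Note that the present paper merely quotes this statement from Song's paper without reproducing the argument, and Song's proof is the same direct Fermi-coordinate computation, so your proposal matches the source.

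One small inaccuracy to fix in the write-up of item (2): you say the slices lie in $\Phi(\partial N \times [\varepsilon, \hat t])$, ``a region shrinking to $\partial N$''. That region does \emph{not} shrink --- as $\varepsilon \to 0$ it grows to the full collar $\Phi(\partial N \times [0,\hat t])$. What shrinks to $\partial N$ is the smaller set $\gamma_\varepsilon(\partial N_\varepsilon \times [0,\delta_\varepsilon])$, because $\partial N_\varepsilon = \Phi(\partial N \times \{\varepsilon\}) \to \partial N$ smoothly and $\delta_\varepsilon \to 0$. You do say exactly the right thing two sentences later (``this confines all the relevant slices to an arbitrarily small collar of the minimal hypersurface $\partial N$''), so this is a phrasing slip rather than a gap, but the parenthetical should be corrected so the stated reason actually implies the uniform $H^g \to 0$.
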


        \begin{lemma}[Lemma 5, \cite{song_existence_2018}]\label{lem:5}
            Let $q \in N \backslash \partial N\,$ such that for small $\varepsilon\,$, $q \in N_\varepsilon\,$. Then $(N_\varepsilon, h_\varepsilon, q)$ converges geometrically to $(\mathcal{C}(N), h, q)$ in the $C^0$ topology. Moreover, the geometric convergence is smooth outside of $\partial N \subset \mathcal{C}(N)$ in the following sense.
            \begin{enumerate}
                \item Let $q \in N\backslash \partial N\,$ such that for small $\varepsilon\,$, $q \in N_\varepsilon\backslash \gamma_\varepsilon(\partial N_\varepsilon \times [0, \delta_\varepsilon))\,$. Then $({N_\varepsilon\backslash \gamma_\varepsilon(\partial N_\varepsilon \times [0, \delta_\varepsilon))}, h_\varepsilon, q)$ converges geometrically to $(N \backslash \partial N, g, q)$ in the $C^\infty$ topology.
                \item Fix any connected component $C$ of $\partial N$; for $\varepsilon$ small enough, we can choose a component $C_\varepsilon$ of $\partial N_\varepsilon$ so that $C_\varepsilon$ converges to $C$ as $\varepsilon \rightarrow 0$. Furthermore, if we let $\set{\varepsilon_k}$ be a sequence converging to $0$, and let $q_k \in \gamma_{\varepsilon_k}(C_{\varepsilon_k}\times [0, \delta_{\varepsilon_k}))$ be a point at any fixed distance $\hat d > 0$ from $\gamma_{\varepsilon_k}(C_{\varepsilon_k} \times \set{\delta_{\varepsilon_k}})$ for the metric $h_{\varepsilon_k}$, $\hat d$ being independent of $k$. Then subsequentially,
                \begin{equation}
                    (\gamma_{\varepsilon_k}(C_{\varepsilon_k} \times [0, \delta_{\varepsilon_k})), h_{\varepsilon_k}, q_k) \rightarrow (C \times (0, \infty), h, q_\infty),
                \end{equation}
                geometrically in the $C^\infty$ topology where $q_\infty \in C\times (0, \infty)$ at a distance $\hat d$ from $C \times \set{0}$.
            \end{enumerate}
        \end{lemma}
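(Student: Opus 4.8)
The plan is to reduce the entire statement to an explicit change of normal variable in the Fermi collar, after which both convergence assertions become essentially tautological. First I would introduce, on $\gamma_\varepsilon(\partial N_\varepsilon \times [0,\delta_\varepsilon])$, the new normal coordinate $s = L_\varepsilon(t) := \int_0^t \vartheta_\varepsilon(\tau)\,d\tau$, which is exactly the $h_\varepsilon$-arclength measured from $\partial N_\varepsilon$; then $h_\varepsilon = g^\varepsilon_t \oplus (\vartheta_\varepsilon\,dt)^2$ becomes the ordinary Fermi-type metric $g^\varepsilon_{t(s)} \oplus ds^2$ over $\partial N_\varepsilon$ with normal parameter $s \in [0, L_\varepsilon(\delta_\varepsilon)]$. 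The defining properties of $\vartheta_\varepsilon$ then acquire transparent meanings: $\int_{[0,\delta_\varepsilon]}\vartheta_\varepsilon \to \infty$ says the collar has $h_\varepsilon$-width tending to infinity; $\int_{[z_\varepsilon,\delta_\varepsilon]}\vartheta_\varepsilon \to 0$ (hence also $\int_{[0,z_\varepsilon]}\vartheta_\varepsilon \to \infty$) says the transition interval $t \in [z_\varepsilon,\delta_\varepsilon]$, on which $\vartheta_\varepsilon$ decreases from its constant value $c_\varepsilon$ on $[0,z_\varepsilon]$ down to $1$, has vanishing $s$-width and is therefore invisible in any fixed bounded $s$-window; and on $[0,z_\varepsilon]$ one has simply $s = c_\varepsilon t$ with $c_\varepsilon z_\varepsilon \to \infty$, so $t(s) = s/c_\varepsilon \in [0,z_\varepsilon]$ on the entire ``visible'' part of the collar.

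For statement (1) I expect there is nothing to do beyond bookkeeping: on $N_\varepsilon \setminus \gamma_\varepsilon(\partial N_\varepsilon \times [0,\delta_\varepsilon))$ one has $h_\varepsilon \equiv g$ by definition, and since $\delta_\varepsilon \to 0$ and $\partial N_\varepsilon \to \partial N$ smoothly these regions exhaust $N \setminus \partial N$. For a fixed interior base point $q$ and a fixed radius $R$, the ball $B_{h_\varepsilon}(q,R)$ lies inside this region once $\varepsilon$ is small, since any curve from $q$ leaving it must traverse the collar, whose $h_\varepsilon$-width is $L_\varepsilon(\delta_\varepsilon) - o(1) \to \infty$. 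Hence the pointed spaces are eventually isometric, on balls of any fixed radius, to domains in $(N \setminus \partial N, g, q)$, giving the pointed $C^\infty$ convergence.

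For statement (2) I would fix a component $C$ of $\partial N$ and the component $C_\varepsilon$ of $\partial N_\varepsilon$ approaching it; $C_\varepsilon$ is a smooth perturbation of $C$ converging to it in $C^\infty$, so its induced metric $g^\varepsilon_0 = g|_{C_\varepsilon}$ converges to $g|_C$ in $C^\infty$. Writing $S_\varepsilon := \gamma_\varepsilon(C_\varepsilon \times \{\delta_\varepsilon\})$ and reparametrizing the collar by $u = L_\varepsilon(\delta_\varepsilon) - s$, the $h_\varepsilon$-distance to $S_\varepsilon$, on the visible part $u \geq \int_{[z_\varepsilon,\delta_\varepsilon]}\vartheta_\varepsilon$ one has $t(u) = z_\varepsilon + (\int_{[z_\varepsilon,\delta_\varepsilon]}\vartheta_\varepsilon - u)/c_\varepsilon \to 0$ uniformly for $u$ in bounded sets. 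Thus the metric $g^\varepsilon_{t(u)} \oplus du^2$ converges to $g|_C \oplus du^2$ locally uniformly, and in fact in $C^\infty$: the $u$-derivatives of $g^\varepsilon_{t(u)}$ carry factors $t'(u) = -c_\varepsilon^{-1} \to 0$, while $\partial_t g^\varepsilon_t$ and the tangential derivatives of $g^\varepsilon_t$ are bounded uniformly in $(t,\varepsilon)$ using the fixed ambient geometry of $g$, the uniformly controlled geometry of $\partial N_\varepsilon$, and the uniform second-fundamental-form bound of Lemma \ref{lem:4}(3). A base point $q_k$ at fixed $h_{\varepsilon_k}$-distance $\hat d$ from $S_{\varepsilon_k}$ sits at $u = \hat d$; under the identification of $u$ with the cylindrical coordinate, with $u = 0$ the bulk attachment $S_\varepsilon \to C\times\{0\}$, the pointed limit is precisely $(C\times(0,\infty), g|_C \oplus ds^2, q_\infty) = (C\times(0,\infty), h, q_\infty)$ with $q_\infty$ at height $\hat d$, as claimed.

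Finally, the global $C^0$ convergence should follow by assembling the two cases: on compact interior pieces the convergence is $C^\infty$, and the above computation shows that near the ends $h_\varepsilon \to h$ locally uniformly — but only in $C^0$, since $h$ itself is merely Lipschitz across $\partial N$, where the smooth interior metric is glued to the product metric on the ends and the second fundamental forms need not agree unless $\partial N$ is totally geodesic. The step I expect to require the most care is precisely this bookkeeping of the three regimes of $\vartheta_\varepsilon$ — verifying that the transition interval $[z_\varepsilon,\delta_\varepsilon]$ disappears in the limit (from $\int_{[z_\varepsilon,\delta_\varepsilon]}\vartheta_\varepsilon \to 0$) while the constant regime $[0,z_\varepsilon]$ generates an end of infinite length (from $c_\varepsilon z_\varepsilon \to \infty$) — together with checking that the uniform geometric bounds of Lemma \ref{lem:4} are enough to promote the manifest $C^0$ convergence of the rescaled collar metrics to $C^\infty$.
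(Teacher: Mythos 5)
The paper does not actually prove this lemma: it is quoted verbatim from A.~Song's paper \cite{song_existence_2018} (his Lemma 5) and used as a black box, so strictly speaking there is no in-paper proof to compare against. Your proposal supplies a from-scratch argument, and it does capture what I take to be the core mechanism: reparametrizing the stretched collar by $h_\varepsilon$-arclength $s = L_\varepsilon(t) = \int_0^t\vartheta_\varepsilon$, so that $h_\varepsilon = g^\varepsilon_{t(s)} \oplus ds^2$, and then reading off the geometric meanings of the three defining conditions on $\vartheta_\varepsilon$ --- total $s$-length $\to\infty$, $s$-length of the transition zone $[z_\varepsilon,\delta_\varepsilon]$ $\to 0$, and $\vartheta_\varepsilon\equiv c_\varepsilon$ on $[0,z_\varepsilon]$ with $c_\varepsilon z_\varepsilon \to\infty$ giving $t(u)=O(c_\varepsilon^{-1})\to 0$ and $t'(u)=-c_\varepsilon^{-1}\to 0$ uniformly on bounded $u$-windows. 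Combined with the uniform bounds on $\partial_t^k g_t^\varepsilon$ coming from the fixed ambient metric and the controlled geometry of the slices (Lemma~\ref{lem:4}), this does give the $C^\infty$ cylindrical convergence in part (2), and the observation that $h_\varepsilon\equiv g$ on $N_\varepsilon\setminus\gamma_\varepsilon(\partial N_\varepsilon\times[0,\delta_\varepsilon))$ while these regions exhaust $N\setminus\partial N$ gives part (1). The assembly remark for the global $C^0$ statement --- $C^\infty$ in each regime but only $C^0$ at the glued interface since $h$ is merely Lipschitz across $\partial N$ --- is the right way to think about it.

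One small inaccuracy worth flagging, though it does not create a gap: in part (1) you assert that for fixed $q$ and $R$ the $h_\varepsilon$-ball $B_{h_\varepsilon}(q,R)$ lies in $N_\varepsilon\setminus\gamma_\varepsilon(\partial N_\varepsilon\times[0,\delta_\varepsilon))$ for $\varepsilon$ small, ``since any curve from $q$ leaving it must traverse the collar.'' A curve leaving that region need not traverse the collar --- it merely crosses the inner slice $\gamma_\varepsilon(\partial N_\varepsilon\times\{\delta_\varepsilon\})$, which sits at bounded (indeed $\to d_g(q,\partial N)$) distance from $q$. So for $R > d_g(q,\partial N)$ the ball does reach the collar. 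The traversal argument is correct for the \emph{full} collar $[0,\delta_\varepsilon]$, i.e.\ it shows $d_{h_\varepsilon}(q,\partial N_\varepsilon)\to\infty$, which is what underlies the $C^0$ convergence of $(N_\varepsilon,h_\varepsilon,q)$ to the complete space $(\mathcal C(N),h,q)$ rather than part (1). For part (1) itself you should simply use the exhaustion-plus-metric-identity observation you already made; the ball claim is superfluous and, as stated, false.
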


        \begin{lemma}[Lemma 6, \cite{song_existence_2018}]\label{lem:6}
            For any $\mathfrak{d}_1 \in (0, \hat d)$, there exists $\eta > 0$ such that for all $\varepsilon > 0$ small, there is an embedding $\theta_\varepsilon: \partial N \times [-\mathfrak{d}_1, \mathfrak{d}_1] \rightarrow N_\varepsilon$ satisfying the following properties:
            \begin{enumerate}
                \item $\theta_\varepsilon(\partial N \times\set{0}) = \gamma_\varepsilon(\partial N_\varepsilon \times \set{\delta_\varepsilon})$ and
                \begin{equation}
                    \theta_\varepsilon(\partial N \times [-\mathfrak{d}_1, \mathfrak{d}_1]) = \set{q \in N_\varepsilon: d_{h_\varepsilon}(q, \gamma_\varepsilon(\partial N_\varepsilon\times \set{\delta_\varepsilon})) \leq \mathfrak{d}_1};
                \end{equation}
                \item $\|\theta^*_\varepsilon h_\varepsilon\|_{C^1(\partial N \times [-\mathfrak{d}_1, \mathfrak{d}_1])} \leq \eta$ where $\|\cdot\|_{C^1(\partial N \times [-\mathfrak{d}_1, \mathfrak{d}_1])}$ is computed w.r.t. the product metric $h':=g\llcorner\partial N \oplus \mathrm{d}s^2$;
                \item the metrics $\theta^*_\varepsilon h_\varepsilon$ converge in the $C^0$ topology to a Lipschitz continuous metric which is smooth outside of $\partial N \times\set{0}$ and
                \begin{equation}
                    \lim_{\varepsilon \rightarrow 0} \|\theta^*_\varepsilon h_\varepsilon\llcorner(\partial N \times [0, \mathfrak{d}_1]) - \gamma^*_0 g \llcorner(\partial N \times [0, \mathfrak{d}_1])\|_{C^0(\partial N \times [0, \mathfrak{d}_1])} = 0.
                \end{equation}
            \end{enumerate}
        \end{lemma}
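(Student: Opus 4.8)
The plan is to build $\theta_\varepsilon$ explicitly from the Fermi coordinates $\gamma_\varepsilon$, after reparametrising the normal variable by $h_\varepsilon$-arclength measured from the slice $S_\varepsilon:=\gamma_\varepsilon(\partial N_\varepsilon\times\set{\delta_\varepsilon})$, and to identify $\partial N$ with $\partial N_\varepsilon$ through the collar by $\iota_\varepsilon(x):=\Phi(x,\varepsilon)$. First I would split $\partial N\times[-\mathfrak{d}_1,\mathfrak{d}_1]$ into the \emph{outer} part $\partial N\times[0,\mathfrak{d}_1]$, pointing into the bulk of $N_\varepsilon$ where $h_\varepsilon=g$, and the \emph{inner} part $\partial N\times[-\mathfrak{d}_1,0]$, pointing back towards $\partial N$ inside the warped collar $\gamma_\varepsilon(\partial N_\varepsilon\times[0,\delta_\varepsilon])$. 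On the outer part I set $\theta_\varepsilon(x,s):=\gamma_\varepsilon(\iota_\varepsilon(x),\delta_\varepsilon+s)$. On the inner part I let $\psi_\varepsilon\colon[-\mathfrak{d}_1,0]\to(0,\delta_\varepsilon]$ be the inverse of the $h_\varepsilon$-arclength function, that is $\int_{\psi_\varepsilon(s)}^{\delta_\varepsilon}\vartheta_\varepsilon(t)\,dt=-s$, equivalently $\psi_\varepsilon'=1/(\vartheta_\varepsilon\circ\psi_\varepsilon)$ with $\psi_\varepsilon(0)=\delta_\varepsilon$, and set $\theta_\varepsilon(x,s):=\gamma_\varepsilon(\iota_\varepsilon(x),\psi_\varepsilon(s))$. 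Since $\vartheta_\varepsilon\equiv1$ near $\delta_\varepsilon$, the two formulas coincide (and are smooth) across $s=0$, so $\theta_\varepsilon$ is a well-defined smooth map; it lands in $N_\varepsilon$ because $\gamma_\varepsilon(\partial N_\varepsilon\times[0,\delta_\varepsilon])\subset\Phi(\partial N\times[\varepsilon,\hat t])\subset N_\varepsilon$ and, for small $\varepsilon$, $\delta_\varepsilon+\mathfrak{d}_1<\hat d$.

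To obtain property (1), note that $\psi_\varepsilon$ is defined on all of $[-\mathfrak{d}_1,0]$ precisely once $\int_{[0,\delta_\varepsilon]}\vartheta_\varepsilon>\mathfrak{d}_1$, which holds for $\varepsilon$ small by the divergence of $\int_{[0,\delta_\varepsilon]}\vartheta_\varepsilon$; injectivity of $\theta_\varepsilon$ then follows from $\gamma_\varepsilon$ being a diffeomorphism of $\partial N_\varepsilon\times[-\hat d,\hat d]$ onto a tubular neighborhood, together with the strict monotonicity of $s\mapsto\delta_\varepsilon+s$ and of $\psi_\varepsilon$. To identify the image with the metric ball, recall that in the gauge $g=g^\varepsilon_t\oplus dt^2$ the curves $t\mapsto\gamma_\varepsilon(y,t)$ are unit-speed $g$-geodesics orthogonal to every slice $\set{t=c}$. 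On the outer side $h_\varepsilon=g$, so these are $h_\varepsilon$-geodesics normal to $S_\varepsilon$; the uniform second fundamental form bound of the slices (Lemma \ref{lem:4}(3), extended to $t\in[\delta_\varepsilon,\delta_\varepsilon+\mathfrak{d}_1]$ by the uniform control of $\partial N_\varepsilon$ and the geometry of $(M,g)$) shows that for $\mathfrak{d}_1<\hat d$ the $h_\varepsilon$-ball of radius $\mathfrak{d}_1$ about $S_\varepsilon$ on that side is embedded and equals $\gamma_\varepsilon(\partial N_\varepsilon\times[\delta_\varepsilon,\delta_\varepsilon+\mathfrak{d}_1])$. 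On the inner side, any path from $\gamma_\varepsilon(y,t_0)$ to $S_\varepsilon$ has $h_\varepsilon$-length at least $\int_{t_0}^{\delta_\varepsilon}\vartheta_\varepsilon$, with equality for the vertical segment, since $\vartheta_\varepsilon\ge1$ penalises every non-monotone excursion and the only exit on that side is through $\partial N_\varepsilon$, which lies at distance $\int_{[0,\delta_\varepsilon]}\vartheta_\varepsilon>\mathfrak{d}_1$; hence the inner $h_\varepsilon$-ball of radius $\mathfrak{d}_1$ about $S_\varepsilon$ equals $\set{\psi_\varepsilon(-\mathfrak{d}_1)\le t\le\delta_\varepsilon}=\theta_\varepsilon(\partial N\times[-\mathfrak{d}_1,0])$. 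The union of the two pieces is the asserted metric ball.

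For (2) and (3), I would compute directly that $\theta_\varepsilon^*h_\varepsilon=g^\varepsilon_{\psi_\varepsilon(s)}\oplus ds^2$ on the inner part and $\theta_\varepsilon^*h_\varepsilon=g^\varepsilon_{\delta_\varepsilon+s}\oplus ds^2$ on the outer part, the $ds^2$ block being exactly what arclength reparametrisation produces. For the $C^1$ bound in (2): the $ds^2$ block is constant; the $\partial N$-tangential derivatives of $g^\varepsilon_\tau$ are bounded uniformly in $\varepsilon$ and $\tau\in[0,\delta_\varepsilon+\mathfrak{d}_1]$ by the ambient geometry and the smooth convergence $\partial N_\varepsilon\to\partial N$; and $\partial_s g^\varepsilon_{\psi_\varepsilon(s)}=\psi_\varepsilon'(s)\,\partial_t g^\varepsilon_t\big|_{t=\psi_\varepsilon(s)}$ with $|\psi_\varepsilon'|=1/\vartheta_\varepsilon\le1$ and $|\partial_t g^\varepsilon_t|$ bounded by the $\varepsilon$-independent second fundamental form constant $C$ of Lemma \ref{lem:4}(3) (on the outer side $\psi_\varepsilon'$ is replaced by $1$). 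This produces an $\eta$ depending only on $C$ and the geometry of $(M,g)$, hence independent of $\varepsilon$. For (3): $\psi_\varepsilon$ is monotone with $0\le\psi_\varepsilon(s)\le\delta_\varepsilon\to0$, so $\sup_{s\le0}\|g^\varepsilon_{\psi_\varepsilon(s)}-g\llcorner\partial N\|_{C^0}\le C\delta_\varepsilon+\|g\llcorner\partial N_\varepsilon-g\llcorner\partial N\|_{C^0}\to0$ and $\theta_\varepsilon^*h_\varepsilon\to h'$ uniformly on $\partial N\times[-\mathfrak{d}_1,0]$, while on $\partial N\times[0,\mathfrak{d}_1]$ it converges uniformly to $\gamma_0^*g$; the two limits agree along $\set{s=0}$, so the global limit is the asserted Lipschitz metric, smooth off $\partial N\times\set{0}$.

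The step I expect to be the main obstacle is the uniform $C^1$ estimate in (2): the key point is that although $\vartheta_\varepsilon$ diverges in integral, it enters $\theta_\varepsilon^*h_\varepsilon$ only through $\psi_\varepsilon'=1/\vartheta_\varepsilon\le1$, so it never amplifies the $s$-derivative of the slice metrics, whose control is in turn supplied entirely by the $\varepsilon$-independent second fundamental form bound of Lemma \ref{lem:4}(3). The subordinate technical point is the identification in (1) of the image of $\theta_\varepsilon$ with an $h_\varepsilon$-metric ball, since $h_\varepsilon$ is only Lipschitz along $\partial N\times\set{0}$; this reduces to ruling out shortcuts through the warped collar, which is immediate from $\vartheta_\varepsilon\ge1$.
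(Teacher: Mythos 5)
This lemma is cited verbatim from A. Song's paper (Lemma~6 in \cite{song_existence_2018}); the present paper reproduces the statement but gives no proof, so there is nothing in-text to compare your argument against. Judged on its own merits, your construction is correct and is the natural one: reparametrize the Fermi normal variable by $h_\varepsilon$-arclength on the warped side via $\psi_\varepsilon$, keep the identity reparametrization on the $h_\varepsilon=g$ side, glue across $s=0$ using $\vartheta_\varepsilon\equiv 1$ near $\delta_\varepsilon$, and read off $\theta_\varepsilon^*h_\varepsilon=(\iota_\varepsilon^*g^\varepsilon_{\tau(s)})\oplus ds^2$. The key observation carrying (2) --- that $\vartheta_\varepsilon$ enters the pullback only through $\psi_\varepsilon'=1/\vartheta_\varepsilon\le 1$, so the divergence of $\int\vartheta_\varepsilon$ never amplifies $\partial_s(\theta_\varepsilon^*h_\varepsilon)$, which is instead controlled by the $\varepsilon$-uniform second-fundamental-form bound of Lemma~\ref{lem:4}(3) --- is exactly right, and the $C^0$ limits in (3) fall out by $\psi_\varepsilon\le\delta_\varepsilon\to 0$ on one side and $\delta_\varepsilon+s\to s$ on the other, with the two limit metrics agreeing at $s=0$ but generically differing in first $s$-derivative (whence only Lipschitz across $\partial N\times\set{0}$).

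Two points you gesture at but should state more carefully if you want a complete proof. First, in identifying $\theta_\varepsilon(\partial N\times[-\mathfrak{d}_1,\mathfrak{d}_1])$ with the metric $\mathfrak{d}_1$-neighborhood of $S_\varepsilon$, you must rule out shortcut paths that leave the Fermi tube entirely and re-enter: on the inner side any exit is through $\partial N_\varepsilon$, which costs $h_\varepsilon$-length $\int_0^{\delta_\varepsilon}\vartheta_\varepsilon>\mathfrak{d}_1$, while on the outer side any exit is through $\set{t=\hat d}$, which costs $g$-length $\ge\hat d-\delta_\varepsilon>\mathfrak{d}_1$ for $\varepsilon$ small; only once both escapes are blocked does the $1$-Lipschitz signed-distance function $t-\delta_\varepsilon$ (Gauss lemma) pin down the neighborhood as the product set. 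Second, the $C^1$ norm in (2) is taken with respect to the fixed product metric $h'$, so you should note that the pullback by the $\varepsilon$-dependent identification $\iota_\varepsilon=\Phi(\cdot,\varepsilon)$ only distorts tangential derivatives by uniformly bounded factors, since $\Phi$ is a fixed smooth collar diffeomorphism. Neither of these is a gap in the idea; they are the technical paragraphs that turn the sketch into a proof.
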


    \subsection{Improved Confined Min-max Theory}

    In $\mathcal{C}(N)$, one is able to define the min-max $p$-width $\omega_p(\mathcal{C}(N))$ with $K_1 \subset K_2 \subset \cdots \subset \mathcal{C}(N)$ an exhaustion of $\mathcal{C}(N)$ by:
    \begin{equation}
        \omega_p(\mathcal{C}(N)) = \omega_p(\mathcal{C}(N), h) := \lim_{i \rightarrow \infty} \omega_p(K_i, h) \in [0, \infty]\,.
    \end{equation}

    By Theorem 8 in \cite{song_existence_2018}, suppose that $\Sigma_1$ has the largest area among all the boundary components and we know that
    \begin{equation}
        p \cdot \mathrm{Area}(\Sigma_1) \leq \omega_p(\mathcal{C}(N)) \leq p \cdot \mathrm{Area}(\Sigma_1) + \hat C p^{\frac{1}{n+1}}\,,
    \end{equation}
    where $\hat C \in (0, \infty)$ depends on $h$. In addition, By Lemma \ref{lem:5} and Lemma \ref{lem:6},
    \begin{equation}
        \lim_{\varepsilon \rightarrow 0} \omega_p(N_\varepsilon, h_\varepsilon) = \omega_p(\mathcal{C}(N))\,.
    \end{equation}

    Theorem 9 in \cite{song_existence_2018} asserts that such a $p$-width could be realized by a union of minimal hypersurfaces in the interior of $N$. Since the metric of $M$ is bumpy, we could improve it using the multiplicity one theorem by X. Zhou \cite{zhou_multiplicity_2019} and the Morse index theorem by F. Marques and A. Neves \cite{marques_morse_2018-1}.

    \begin{theorem}\label{thm:conf_minmax}
        In $(\mathcal{C}(N), h)$, for any $p \in \mathbb{N}^+$, there exists a pairwise disjoint union of connected, smooth, closed, embedded, two-sided, minimal hypersurfaces $\Gamma_1, \cdots, \Gamma_l$ contained in $N\backslash \partial N$ such that
        \begin{equation}
            \begin{aligned}
                \omega_p(\mathcal{C}(N)) &= \sum^l_{j = 1} \mathrm{Area}(\Gamma_j)\,,\\
                p &= \sum^l_{j = 1} \mathrm{index}(\Gamma_j)\,.
            \end{aligned}
        \end{equation}
    \end{theorem}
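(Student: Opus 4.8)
The plan is to follow A. Song's proof of Theorem 9 in \cite{song_existence_2018}, but to replace the Almgren--Pitts min-max used there by X. Zhou's prescribed--mean--curvature min-max \cite{zhou_existence_2018, zhou_multiplicity_2019} --- which upgrades the conclusion to multiplicity one --- and to keep track of the Morse index by means of the estimate of Marques--Neves \cite{marques_morse_2018-1}. The structural point that makes this possible is that, by the confinement built into Song's construction, every minimal hypersurface produced along the way lies in the open region of $\mathcal{C}(N)$ on which the metric $h$ coincides with the bumpy metric $g$ of $M$; there the hypersurfaces are nondegenerate, so the hypotheses of \cite{zhou_multiplicity_2019} and \cite{marques_morse_2018-1} are in force.

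Concretely, I would first fix $p$ and a small $\varepsilon \in (0, \hat t)$ and carry out, in $(N_\varepsilon, h_\varepsilon)$, the prescribed--mean--curvature min-max for $\omega_p(N_\varepsilon, h_\varepsilon)$. By Lemma \ref{lem:4}, for $\varepsilon$ fixed the slices $\gamma_\varepsilon(\partial N_\varepsilon \times \set{t})$, $t \in [0, \delta_\varepsilon]$, together with $\partial N_\varepsilon$, have mean curvature of a definite sign bounded away from zero, hence serve as a barrier for prescribed--mean--curvature hypersurfaces whose prescription function is small enough; this confines those hypersurfaces, and in the limit the minimal hypersurface, to $N_\varepsilon \setminus \gamma_\varepsilon(\partial N_\varepsilon \times [0, \delta_\varepsilon))$, where $h_\varepsilon = g$. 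Exactly as in \cite{song_existence_2018} (this is where the factor $\vartheta_\varepsilon$ and the divergence $\int_{[0,\delta_\varepsilon]} \vartheta_\varepsilon \to \infty$ enter) one confines it further to a fixed compact subset of $N \setminus \partial N$. Since $g$ is bumpy, the resulting minimal hypersurface --- being a closed minimal hypersurface of $(M,g)$ --- is nondegenerate, so Zhou's theorem \cite{zhou_multiplicity_2019} produces a disjoint union $\Gamma^\varepsilon = \Gamma^\varepsilon_1 \sqcup \cdots \sqcup \Gamma^\varepsilon_{l_\varepsilon}$ of connected, smooth, closed, embedded, two--sided minimal hypersurfaces, each counted with multiplicity one, with $\sum_j \mathrm{Area}(\Gamma^\varepsilon_j) = \omega_p(N_\varepsilon, h_\varepsilon)$; and, as in \cite[Theorem 8.4]{marques_morse_2018-1}, the Morse index estimate of \cite{marques_morse_2018-1} together with multiplicity one and nondegeneracy gives $\sum_j \mathrm{index}(\Gamma^\varepsilon_j) = p$.

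Next I would let $\varepsilon = \varepsilon_k \to 0$ and pass to the limit. Since $\omega_p(N_{\varepsilon_k}, h_{\varepsilon_k}) \to \omega_p(\mathcal{C}(N)) < \infty$, the areas are uniformly bounded, every component has index at most $p$, and all the $\Gamma^{\varepsilon_k}_j$ lie in a fixed compact subset of $N \setminus \partial N$; by the compactness theorem for minimal hypersurfaces of bounded area and index (available because $3 \le n+1 \le 7$) a subsequence converges, as varifolds and smoothly away from finitely many points, to $\Gamma = \sum_i m_i\, \Gamma_i$ with $\Gamma_i \subset N \setminus \partial N$ connected, smooth, closed, embedded, minimal, $m_i \in \mathbb{N}^+$, and $\sum_i m_i\, \mathrm{Area}(\Gamma_i) = \omega_p(\mathcal{C}(N))$. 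Now $\Gamma$ realizes the width $\omega_p(\mathcal{C}(N))$, so Zhou's theorem --- in the form adapted to $\mathcal{C}(N)$ through the exhaustion --- applies to it and forces $m_i = 1$ and two--sidedness of every $\Gamma_i$ (alternatively, bumpiness makes the $\Gamma_i$ nondegenerate, which together with two--sidedness rules out $m_i \ge 2$, as one would otherwise extract a nonzero Jacobi field on some $\Gamma_i$). Once $m_i = 1$ for all $i$, nondegeneracy implies that for $k$ large exactly one component of $\Gamma^{\varepsilon_k}$ is a normal graph over each $\Gamma_i$, so $\sum_i \mathrm{index}(\Gamma_i) = \sum_j \mathrm{index}(\Gamma^{\varepsilon_k}_j) = p$. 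Relabelling the $\Gamma_i$ as $\Gamma_1, \dots, \Gamma_l$ proves the statement.

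I expect the main difficulty to be the adaptation itself: one must check that both the prescribed--mean--curvature min-max of \cite{zhou_existence_2018, zhou_multiplicity_2019} and the Lusternik--Schnirelmann index argument of \cite{marques_morse_2018-1} go through in the presence of the mean--convex barrier of $(N_\varepsilon, h_\varepsilon)$ --- in particular that the prescription function can be taken small compared with the (for fixed $\varepsilon$) definite mean curvature of the barrier slices from Lemma \ref{lem:4}, that the diagonal passage $\varepsilon_k \to 0$ together with vanishing prescription is compatible with Song's confinement and with both the upper and lower Morse index bounds, and that no area or index is lost onto $\partial N$ in the limit (the latter being handled exactly as in \cite{song_existence_2018} via Lemmas \ref{lem:5}--\ref{lem:6}). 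The remaining steps rest only on Lemmas \ref{lem:4}--\ref{lem:6} and the standard compactness theory for minimal hypersurfaces.
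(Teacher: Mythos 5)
Your proposal follows essentially the same three-step scheme as the paper: confine the relevant minimal hypersurfaces to a fixed compact region of $N\setminus\partial N$ where $h_\varepsilon=g$ (Step 1), run the Zhou--Marques--Neves machinery in $(N_\varepsilon,h_\varepsilon)$ to produce a multiplicity-one realization with index $p$ (Step 2), and pass to the limit $\varepsilon\to 0$ (Step 3). The one place the paper is more explicit than your sketch is within Step 2: rather than applying Zhou's theorem to $(N_\varepsilon,h_\varepsilon)$ as a black box (which does not literally apply, since $N_\varepsilon$ has boundary), the paper interposes a metric perturbation $h^i_\varepsilon$ supported near the finitely many candidate hypersurfaces to make their areas $\mathbb{Q}$-linearly independent, runs the sweepout/PMC argument and the Lusternik--Schnirelmann index bound for each $h^i_\varepsilon$ (with the boundary issues handled in Appendix A), and then sends $i\to\infty$ to recover $h_\varepsilon$ --- you flag this adaptation as the ``main difficulty'' but do not carry it out. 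Also, your secondary suggestion that Zhou's theorem ``applies to $\mathcal{C}(N)$ through the exhaustion'' is not sound since $\mathcal{C}(N)$ is noncompact, but your primary argument for $m_i=1$ (nondegeneracy of the $\Gamma_i$ rules out a nontrivial Jacobi field and hence higher multiplicity) is exactly the point the paper also uses, and is simpler still once one notes, as in the paper's Step 1, that for small $\varepsilon$ the Step-2 output is already one of finitely many $g$-minimal hypersurfaces at definite distance from $\partial N$, so the $\varepsilon\to 0$ limit is eventually stationary.
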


    \begin{proof}
        The proof will consist of three steps.\\

        \paragraph*{\textbf{Step 1}} We are going to show that $\exists\varepsilon_0 > 0$ s.t. $\forall \varepsilon\in (0, \varepsilon_0)\,$, any connected embedded minimal hypersurface in $N_\varepsilon$ with area bounded by $\left(\omega_p(\mathcal{C}(N)) + 1 \right)$ from above and index bounded by $p$ from above is inside $N_\varepsilon \backslash \gamma_\varepsilon({\partial N_\varepsilon \times [0, \delta_\varepsilon]})\,$. In fact, this essentially follows the proof for Theorem 9 in \cite{song_existence_2018} so we only sketch it here.

        Note that the metric $h_\varepsilon$ defined in (\ref{eqn:he}) indicates that the region $\gamma_\varepsilon(\partial N_\varepsilon \times [0, \delta_\varepsilon])$ looks more and more cylindrical, and each slice has non-zero mean curvature by Lemma \ref{lem:4} (1). If we fixed a point $q \in N \backslash \partial N$, then for $\varepsilon > 0$ small enough, then we know that $q \in N_\varepsilon \backslash \gamma_\varepsilon(\partial N_\varepsilon \times [0, \delta_\varepsilon])\,$. 

        First, the monotonicity formula together with the maximum principle implies that there exists an $\tilde R > 0$ independent of $\varepsilon$ such that any free boundary (possibly closed) embedded minimal hypersurface $\Gamma$ in $N_\varepsilon$ with area bounded by $\left(\omega_p(\mathcal{C}(N)) + 1\right)$ is inside $B_{h_\varepsilon}(q, \tilde R)$ as long as $\varepsilon>0$ is small enough. Thus, $\Gamma$ must be closed.

        Second, for any sequence of embedded minimal hypersurfaces $\set{\Gamma_{\varepsilon_k} \subset N_{\varepsilon_k}}$ with area bounded by $\left(\omega_p(\mathcal{C}(N), h) + 1\right)$ and Morse index bounded by $p$, if $\varepsilon_k \rightarrow 0\,$, then by Lemma \ref{lem:5} and Lemma \ref{lem:6}, we have a limiting varifold $V_\infty$ in $\mathcal{C}(N)$ from a subsequence of $\set{\|\Gamma_{\varepsilon_k}\|}$ with total volume $\limsup_{k \rightarrow \infty} \mathrm{Area}(\Gamma_{\varepsilon_k})$ bounded by $(\omega_p(\mathcal{C}(N), h) + 1)$. By Sharp's compactness \cite{sharpCompactnessMinimalHypersurfaces2017}, $\mathrm{spt}(V_\infty)$ is smooth outside $\partial N \times \set{0}\,$. The maximum principle implies that if $\mathrm{spt}(V_\infty) \cap (\mathcal{C}(N)\backslash \mathrm{clos}(N)) \neq \emptyset\,$, $\mathrm{spt}(V_\infty)$ would be a connected component of some slice $\partial N \times \set{\delta}\,$. However, due to the mean concaveness of the foliation, $\Gamma_{\varepsilon_k}$ for $k$ large intersects $\Phi(\partial N \times \set{\hat d})$ and so does $\mathrm{spt}(V_\infty)$, giving a contradiction.

        As a consequence, $\mathrm{spt}(V_\infty)$ is contained in $(N, g)\,$. A delicate and direct computation in the proof for Theorem 9 in \cite{song_existence_2018} shows that $V_\infty$ is in fact $g$-stationary, and the maximum principle by B. White \cite{white_maximum_2009} implies that $V_\infty$ is contained in the interior of $N\,$. And therefore, $\Sigma_\infty = \mathrm{spt}(V_\infty)$ is a smooth minimal hypersurface embedded in $N$ with area bounded by $(\omega_p(\mathcal{C}(N), h) + 1)$ and Morse index bounded by $p\,$.

        Finally, since $\partial N$ has no Jacobi field due to bumpiness, by Sharp's compactness theorem again, $\exists \mathfrak{d}_2 > 0$ s.t. any closed embedded minimal hypersurface in the interior of  $N$ with area bounded by ${(\omega_p(\mathcal{C}(N), h) + 1)}$ and Morse index bounded by $p$ has a distance at least $\mathfrak{d}_2$ from $\partial N\,$. Because the convergence above holds for any sequence with $\varepsilon_k \rightarrow 0\,$, and $\delta_\varepsilon \rightarrow 0$ as $\varepsilon \rightarrow 0\,$, the monotonicity formula and the maximum principle together imply that $\exists\varepsilon_0 > 0$ s.t. $\forall \varepsilon\in (0, \varepsilon_0)$, any connected embedded minimal hypersurface in $N_\varepsilon$ with area bounded by $\left(\omega_p(\mathcal{C}(N), g) + 1 \right)$ from above and index bounded by $p$ from above has a distance at least $\mathfrak{d}_2/2 > 0$ away from $\gamma_\varepsilon(\partial N_\varepsilon \times \set{\delta_\varepsilon})$. Hence, the minimal hypersurface is inside $N_\varepsilon \backslash \gamma_\varepsilon({\partial N_\varepsilon \times [0, \delta_\varepsilon]})\,$.
        
        A byproduct is that all these minimal hypersurfaces in $(N_\varepsilon, h_\varepsilon)$ with bounded area and bounded Morse index are in fact minimal hypersurfaces in the interior of $(N, g)$ so non-degenerate and only finitely many.\\

        \paragraph*{\textbf{Step 2}} Fix $\varepsilon \in (0, \varepsilon_0)$ small enough such that $\omega_p(N_\varepsilon, h_\varepsilon) < \omega_p(\mathcal{C}(N)) + 1\,$. Following the proof for Theorem 8.3 in \cite{marques_morse_2018-1}, we can perturb metric $h_\varepsilon$ as in Proposition 8.6 therein to get $h^i_\varepsilon$, and prove for each $h^i_\varepsilon$ the existence of a sequence of two-sided $p$-width minimal hypersurfaces with multiplicity one and index $p\,$. Then, taking the limit and using the nondegeneracy, we obtain a two-sided minimal hypersurface with multiplicity one, index $p$ and area $\omega_p(N_\varepsilon, h_\varepsilon)\,$.

        To be more precise, let $\mathcal{M}_p(h_\varepsilon) = \set{\Gamma_1, \cdots, \Gamma_q}\,,\, q \in \mathbb{N}$ to be the finite set of all connected embedded minimal hypersurfaces in the interior of $N_\varepsilon$ with area bounded by $\left(\omega_p(\mathcal{C}(N)) + 1 \right)$ and index bounded by $p\,$. We choose $\eta> 0$ small and for each $l \in \set{1, \cdots, q}\,$, pick $p_l \in \Gamma_l\backslash(\bigcup_{k \neq l} \Gamma_k)$ s.t. $B_{h_\varepsilon}(p_l, \eta) \cap (\bigcup_{k \neq l} \Gamma_k) = \emptyset$ and nonnegative functions $f_l \in C^\infty_c(B_{h_\varepsilon}(p_l, \eta)), f_l|_{\Gamma_l} \not\equiv 0\,$, s.t., $(\nabla_{h_\varepsilon} f_l)(x) \in T_x \Gamma_l$ for every $x \in \Gamma_l\,$. Then for the metric
        \begin{equation}
            \hat h_\varepsilon(t_1, t_2, \cdots, t_q) = \exp(2\sum^q_{l = 1} t_l f_l) h_\varepsilon\,,
        \end{equation}
        $\Gamma_l$ is still minimal for each $l\,$.
        
        It is possible to take $(t^{(i)}_1, \cdots, t^{(i)}_q) \subset (0, 1)^q$ to be a sequence tending to $0$ and for the metric $h^i_\varepsilon:=\hat h_\varepsilon(t^{(i)}_1, \cdots, t^{(i)}_q)\,$,
        \begin{equation}
            \mathrm{Area}_{h^{i}_\varepsilon}(\Gamma_1), \cdots, \mathrm{Area}_{h^{i}_\varepsilon}(\Gamma_q)\,,
        \end{equation}
        are linearly independent over $\mathbb{Q}$. Sharp's compactness theorem shows that for $i$ large enough, any connected embedded minimal hypersurface in $(N_\varepsilon, h^i_\varepsilon)$ with area bounded by $\left(\omega_p(\mathcal{C}(N), h) + 1 \right)$ and index bounded by $p$ is one of $\set{\Gamma_1, \cdots, \Gamma_q}$ and thus non-degenerate, closed, contained inside $N_\varepsilon \backslash \gamma_\varepsilon({\partial N_\varepsilon \times [0, \delta_\varepsilon]})\,$.

        Proposition 8.7 in \cite{marques_morse_2018-1} still implies the existence of a homotopy class $\Pi_i$ of $p$-sweepouts with $\omega_p(N_\varepsilon, h^i_\varepsilon) = \mathbf{L}(\Pi_i)\,$, and one can observe (See details in Appendix \ref{app:A}) that the proof for X. Zhou's Multiplicity One Theorem \cite{zhou_multiplicity_2019} works in our setting and induces a minimizing sequence $\set{\Phi^{(i)}_j}_j$ in $\Pi_i$ with critical set $\mathbf{C}(\set{\Phi^{(i)}_j}_j)$ containing a multiplicity one, two-sided, embedded $h^i_\varepsilon$-minimal hypersurfaces $\set{\Gamma^{(i)}_j}^l_{j = 1}$ with $\sum^l_{j = 1} \mathrm{index}(\Gamma^{(i)}_j)\leq p$. Furthermore, $\set{\Gamma^{(i)}_j}^l_{j = 1}$ is a subset of $\set{\Gamma_1, \cdots, \Gamma_q}$ mentioned above. The $\mathbb{Q}$-independency of 
        \begin{equation}
            \set{\mathrm{Area}_{h^{i}_\varepsilon}(\Gamma_1), \cdots, \mathrm{Area}_{h^{i}_\varepsilon}(\Gamma_q)}\,,
        \end{equation}
        implies that such a realization is unique. 

        Since all minimal hypersurfaces with bounded area $(\omega_p(\mathcal{C}(N), h) + 1)$ are far away from the boundary $\partial N_\varepsilon$ and closed, Theorem 8.2 in \cite{marques_morse_2018-1} also works in our setting and leads to
        \begin{equation}
            \sum^l_{j = 1} \mathrm{index}(\Gamma^{(i)}_j) = p\,.
        \end{equation}
        
        Taking the limit $i \rightarrow \infty\,$, Sharp's compactness theorem indicates the existence of a two-sided minimal hypersurface with multiplicity one, index $p$ and area $\omega_p(N_\varepsilon, h_\varepsilon)$ (possibly with multiple components) inside $N_\varepsilon \backslash \gamma_\varepsilon({\partial N_\varepsilon \times [0, \delta_\varepsilon]})\,$.\\

        \paragraph*{\textbf{Step 3}} As mentioned in the end of Step 1, the minimal hypersurface from Step 2 is in fact a $g$-minimal hypersurface in the interior of $(N, g)$ with a distance to the boundary at least $\mathfrak{d}_2\,$. Take the limit $\varepsilon \rightarrow 0\,$, Sharp's compactness theorem again leads to the desired union of connected minimal hypersurfaces.
    \end{proof}

\section{Proof of Theorem \ref{thm:main}}

    Firstly, we construct $(N, \partial N)$ by cutting out along two-sided stable minimal hypersurfaces. Indeed, suppose that $\set{\Sigma_i}^I_{i = 1}$ is the union of all connected two-sided stable minimal hypersurfaces in $M$, and since $g$ is bumpy, $I$ is either a natural number or the $\omega$ ordinal. Thus, we may assume that $\mathrm{Area}(\Sigma_i) \leq \mathrm{Area}(\Sigma_{i+1})$ for any $i < I$, if $I \geq 2$.

    If $I \neq 0\,$, $(N, \partial N)$ will be constructed inductively.

    \begin{itemize}
        \item Let $N_1$ be the metric completion of a connected component of $M \backslash \Sigma_1\,$. 
        \item Suppose that $N_j (j \geq 1)$ is constructed. If there exists at least one hypersurface in $\set{\Sigma_i}^I_{i = 1}$ in the interior of $N_j\,$, we may take the first such minimal hypersurface, denoted by $\Sigma'_{j+1}$, and let $N_{j+1}$ be the metric completion of a connected component of $N_j \backslash \Sigma'_{j+1}\,$. Otherwise, we will terminate the induction with $(N, \partial N):=(N_j, \partial N_j)\,$.
    \end{itemize}
    Thus, there will be three cases. If $I = 0\,$, we have $M$ itself, or if $I \neq 0$, we obtain either $(N, \partial N)$ or a sequence of connected manifolds with boundary $\set{(N_j, \partial N_j)}\,$.

    $\,$

    \paragraph*{\textbf{Case 1}} $I = 0\,$.

    We know that $M$ has Frankel property for connected two-sided minimal hypersurfaces, i.e., any two connected two-sided minimal hypersurfaces should intersect with each other. Indeed, suppose this is not true and we have two connected two-sided minimal hypersurfaces $\Gamma_1$ and $\Gamma_2\,$, which must be unstable by assumption. By a standard area-minimization argument (See, for example, the proof of Lemma 11 in \cite{song_existence_2018}), there exists a two-sided stable minimal hypersurface in $M$ giving a contradiction to the assumption that $I = 0\,$.

    Combining Frankel property and Theorem 8.3 in \cite{marques_morse_2018-1}, we have a sequence of connected, smooth, closed, embedded, multiplicity one, two-sided, minimal hypersurfaces $\set{\Gamma_p}^\infty_{p = 1}$ such that 
    \begin{equation}
        \omega_p(M, g) = \mathrm{Area}(\Gamma_p)\,, \quad \mathrm{index}(\Gamma_p) = p\,.
    \end{equation}

    $\,$

    \paragraph*{\textbf{Case 2}} $(N, \partial N) = (N_j, \partial N_j)$ for some $j\,$.

    By Lemma 11 in \cite{song_existence_2018}, we know that $(N, \partial N)$ has Frankel property for two-sided minimal hypersurfaces in the interior. Suppose that $\partial N = \bigcup^m_{i = 1} \Sigma_i$ and $\Sigma_1$ is the one with the largest area. Combining Frankel property and Theorem \ref{thm:conf_minmax}, we have a sequence of connected, smooth, closed, embedded, two-sided minimal hypersurfaces $\set{\Gamma_p}^\infty_{p = 1}$ such that 
    \begin{equation}
        \mathrm{Area}(\Gamma_p) = \omega_p(\mathcal{C}(N))\,, \quad \mathrm{index}(\Gamma_p) = p\,.
    \end{equation}

    $\,$

    \paragraph*{\textbf{Case 3}} There exists an infinite sequence $\set{(N_j, \partial N_j)}\,$.

    For each $(N_j, \partial N_j)\,$, suppose that $\partial N_j = \bigcup^{m_j}_{i = 1} \Sigma^j_i$ where $\Sigma^j_i$ is a connected stable minimal hypersurface and $\Sigma^j_1$ is the one with the largest area. Since the metric $g$ is bumpy, we have that
    \begin{equation}
        \lim_{j \rightarrow \infty}\mathrm{Area}(\Sigma^j_1) = \infty\,.
    \end{equation}
    
    Thus, for any fixed $p \in \mathbb{N}^+\,$, there exists a $j_p \in \mathbb{N}$ such that $\mathrm{Area}(\Sigma^{j_p}_1) \geq p$ and by construction, we know that any two-sided stable minimal hypersurface in the interior of $N_{j_p}$ has area no less than $\mathrm{Area}(\Sigma^{j_p}_1)\,$. In fact, this is true for any two-sided minimal hypersurface inside $N_{j_p}\,$.

    \begin{lemma}\label{lem:area}
        If $\Gamma$ is a connected two-sided minimal hypersurface in $N_{j_p}\backslash \partial N_{j_p}\,$, then $\mathrm{Area}(\Gamma) \geq \mathrm{Area}(\Sigma^{j_p}_1)\,$.
    \end{lemma}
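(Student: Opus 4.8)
The plan is to reduce to the fact --- already observed just before the statement --- that a connected two-sided \emph{stable} minimal hypersurface in $\mathrm{int}(N_{j_p})$ has area at least $\mathrm{Area}(\Sigma^{j_p}_1)$, and then to dispose of an unstable $\Gamma$ by the standard area-minimization argument (the one invoked for Case 1 above and carried out in the proof of Lemma 11 of \cite{song_existence_2018}).

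First I would unwind the construction. Since $\mathrm{int}(N_{j+1}) \subseteq \mathrm{int}(N_j)$ and $\Sigma'_{j+1}$ is by definition the smallest-area connected two-sided stable minimal hypersurface of $M$ contained in $\mathrm{int}(N_j)$, a short induction shows that $j \mapsto \mathrm{Area}(\Sigma'_j)$ is non-decreasing and that every component of $\partial N_j$ has area at most $\mathrm{Area}(\Sigma'_j)$; hence $\mathrm{Area}(\Sigma^{j_p}_1) = \mathrm{Area}(\Sigma'_{j_p})$, and this is precisely the smallest area of a connected two-sided stable minimal hypersurface contained in $\mathrm{int}(N_{j_p-1})$. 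Now take $\Gamma \subseteq \mathrm{int}(N_{j_p}) \subseteq \mathrm{int}(N_{j_p-1})$ connected, two-sided, minimal, and suppose toward a contradiction that $\mathrm{Area}(\Gamma) < \mathrm{Area}(\Sigma^{j_p}_1)$. If $\Gamma$ is stable it is one of the competitors defining $\Sigma'_{j_p}$, so $\mathrm{Area}(\Gamma) \ge \mathrm{Area}(\Sigma'_{j_p}) = \mathrm{Area}(\Sigma^{j_p}_1)$, a contradiction; so $\Gamma$ must be unstable.

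For an unstable $\Gamma$ I would run the area-minimization. Let $u > 0$ be a first eigenfunction of the Jacobi operator on $\Gamma$; negativity of the lowest eigenvalue makes $s = 0$ a strict local maximum of $s \mapsto \mathrm{Area}(\Gamma_s)$ for the normal graphs $\Gamma_s = \{\exp_x(s\, u(x)\, \nu(x)) : x \in \Gamma\}$, so $\mathrm{Area}(\Gamma_s) < \mathrm{Area}(\Gamma)$ for all small $s \neq 0$ and, for fixed small $s$, the hypersurface $\Gamma_s$ is strictly mean convex with mean curvature vector pointing away from $\Gamma$. Let $\Omega_s$ be the closure of a component of $N_{j_p-1} \setminus \Gamma_s$ that meets $\partial N_{j_p-1}$, and minimize perimeter among Caccioppoli sets in $\Omega_s$ in the relative homology class of $\Gamma_s$ (i.e.\ among hypersurfaces separating $\Gamma_s$ from $\partial N_{j_p-1} \cap \Omega_s$). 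Here $\partial N_{j_p-1}$ is a genuine barrier: each of its components is a non-degenerate stable minimal hypersurface of $M$, hence by bumpiness carries the one-sided contracting neighbourhood $\Phi$ of Section 2, whose slices have mean curvature vector pointing toward it (cf.\ Lemma \ref{lem:4}); and $\Gamma_s$ is a strict barrier on the other side. By the compactness theorem for Caccioppoli sets and the Schoen--Simon regularity theory \cite{schoen_regularity_1981} (valid since $3 \le n+1 \le 7$), the minimizer is a smooth embedded minimal hypersurface $\Sigma$, two-sided as the reduced boundary of a set, and area-minimizing in its class hence stable, with $\mathrm{Area}(\Sigma) \le \mathrm{Area}(\Gamma_s) < \mathrm{Area}(\Gamma) < \mathrm{Area}(\Sigma^{j_p}_1)$; the strong maximum principle together with the strict mean convexity of the two barriers keeps $\Sigma$ inside $\mathrm{int}(N_{j_p-1})$ and off $\Gamma_s$. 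Provided $\Sigma$ is non-empty, a connected component of it is a connected two-sided stable minimal hypersurface in $\mathrm{int}(N_{j_p-1})$ of area strictly below $\mathrm{Area}(\Sigma'_{j_p})$, contradicting the previous paragraph.

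I expect the main obstacle to be exactly the non-triviality of this minimization --- guaranteeing that a component $\Omega_s$ as above exists and that the minimizer is not the empty set --- and, relatedly, the degenerate configuration in which $\Gamma$ (hence $\Gamma_s$) bounds a region of $N_{j_p-1}$ disjoint from $\partial N_{j_p-1}$, so that $\Gamma_s$ is the only barrier available on that side. This is the point at which the structure of Song's construction, and in particular his choice of connected component at each cutting step, is used; I would follow \cite{song_existence_2018} (and \cite{chodosh_minimal_2019}) there. Everything else --- the unwinding of the construction, the stable case, and the barrier property of $\partial N_{j_p-1}$ coming from the contracting foliation --- is routine.
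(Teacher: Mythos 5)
Your overall strategy matches the paper's: dispose of the stable case by the construction of the $N_j$'s, and dispose of the unstable case by pushing $\Gamma$ off itself and running a constrained area minimization to produce a cheap stable hypersurface. But the gap you flag at the end is genuine, and the way you set up the minimization does not let you close it. You minimize inside $\Omega_s\subset N_{j_p-1}$ with $\Gamma_s$ as one barrier and $\partial N_{j_p-1}$ as the other. The strong maximum principle keeps the minimizer off $\Gamma_s$, but the components of $\partial N_{j_p-1}$ are merely minimal, not strictly mean convex into $\Omega_s$ (the contracting foliation of Section~2 has its mean curvature vector pointing \emph{towards} $\partial N$, the wrong sign for a one-sided barrier here), so the minimizer can coincide with part of $\partial N_{j_p-1}$ --- equivalently, the minimizing Caccioppoli set can fill all of $\Omega_s$. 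In that event you obtain no hypersurface in $\mathrm{int}(N_{j_p-1})$ at all, only pieces of $\partial N_{j_p-1}$; and since those lie among $\Sigma'_1,\dots,\Sigma'_{j_p-1}$, each of area $\le \mathrm{Area}(\Sigma'_{j_p-1})\le \mathrm{Area}(\Sigma'_{j_p})=\mathrm{Area}(\Sigma^{j_p}_1)$, their areas give nothing to contradict. This is exactly your ``degenerate configuration,'' and pointing to the references does not resolve it, because they do not minimize in $N_{j_p-1}$.

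The paper avoids this by minimizing inside $N_{j_p}$ rather than $N_{j_p-1}$, and by choosing the side of $\Gamma$ so that $\Sigma^{j_p}_1$ remains on the boundary. In the separating case, let $\tilde N$ be the metric completion of the component of $N_{j_p}\setminus\Gamma$ with $\Sigma^{j_p}_1\subset\partial\tilde N$, and minimize $[\Gamma]$ in $\tilde N$. If the minimizer has a component in $\mathrm{int}(N_{j_p})$, it is a two-sided stable hypersurface there of area $<\mathrm{Area}(\Sigma^{j_p}_1)$, contradicting the construction of $N_{j_p}$. If instead the minimizer lies entirely in $\partial\tilde N\setminus\Gamma$, then, since $\tilde N$ is connected and the minimizer together with $\Gamma$ must cobound a region, it is all of $\partial\tilde N\setminus\Gamma\supset\Sigma^{j_p}_1$, hence has area $\ge \mathrm{Area}(\Sigma^{j_p}_1)$ --- again a contradiction, because the minimizer has area $<\mathrm{Area}(\Gamma)<\mathrm{Area}(\Sigma^{j_p}_1)$. (The non-separating case is the easy one: $[\Gamma]\neq 0$ in $N_{j_p}$ and the maximum principle gives an interior component directly.) So the missing ingredient is not a technicality to be delegated; it is the specific choice of ambient manifold for the minimization --- one level down from yours --- together with the observation that even the degenerate outcome forces the minimizer to swallow the largest boundary component.
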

    \begin{proof}
        We only need to deal with the case where $\Gamma$ is unstable. In this case, $\Gamma$ has an expanding neighborhood nearby inside $N_{j_p}\,$. Suppose that the conclusion does not hold, and then the two-sided unstable minimal hypersurface $\Gamma$ has $\mathrm{Area}(\Gamma) < \mathrm{Area}(\Sigma^{j_p}_1)\,$. 

        If $\Gamma$ is non-separated in $N_{j_p}$, we can minimize the area in the cohomology class $[\Gamma] \in H_n(M; \mathbb{Z}_2)$ inside $N_{j_p}\,$. By maximum principle, we obtain a two-sided stable minimal hypersurface with at least one component $\Gamma'$ in the interior of $N_{j_p}$ with
        \begin{equation}
            \mathrm{Area}(\Gamma') < \mathrm{Area}(\Gamma) < \mathrm{Area}(\Sigma^{j_p}_1)\,,
        \end{equation}
        giving a contradiction.
        
        On the other hand, if $\Gamma$ separates $N_{j_p}\,$, let $\tilde N$ be the metric completion of the connected component of $N_{j_p} \backslash \Gamma$ such that $\Sigma^{j_p}_1 \subset \partial \tilde N\,$. Since $\Gamma$ is unstable, there exists an expanding neighborhood of $\Gamma$ in $\tilde N$ and then minimizing the homology class $[\Gamma]$ gives us a stable minimal hypersurface, whose connected component by maximum principle again is either contained in $\partial \tilde N \backslash \Gamma$ or in the interior of $\tilde N\,$. Note that the area of each component is strictly less than $\mathrm{Area}(\Sigma^{j_p}_1)\,$, so none of them could be inside the interior of $N_{j_p}$ and thus the interior of $\tilde N$. As a consequence, by the fact that $\partial \tilde N$ is the boundary of $\tilde N\,$, the minimal hypersurface obtained from the minimizing procedure in $[\Gamma]$ could only be exactly $\partial \tilde N\backslash \Gamma$, which also gives a contradiction from the following inequality:
        \begin{equation}
            \mathrm{Area}(\partial \tilde N\backslash \Gamma) < \mathrm{Area}(\Gamma) < \mathrm{Area}(\Sigma^{j_p}_1) \leq \mathrm{Area}(\partial \tilde N\backslash \Gamma)\,.
        \end{equation}
        
        In summary, any two-sided minimal hypersurface in $N_{j_p}$ has area no less than $\mathrm{Area}(\Sigma^{j_p}_1)\,$.
    \end{proof}

    Now, Let's define
    \begin{equation}
        \begin{aligned}
            \mathcal{N}(p) := \# \set{\Sigma \subset M: \text{ a connected closed two-sided stable minimal hypersurface with }\\
                 \mathrm{Area}(\Sigma) \leq (p+1)\cdot\mathrm{Area}(\Sigma^{j_p}_1)} < \infty\,.
        \end{aligned}
    \end{equation}
    The finiteness follows from the bumpiness of the metric (See Lemma 2.1 in \cite{chodosh_minimal_2019}).

    Following the idea in the proof of Lemma 2.2 in \cite{chodosh_minimal_2019}, we have the following counting estimate.

    \begin{lemma}\label{lem:num}
        Suppose that $\set{\Gamma_k}^l_{k = 1} \subset N_{j_p}\backslash \partial N_{j_p}$ is a set of disjoint, connected, closed, two-sided, minimal hypersurfaces with area no greater than ${(p+1) \cdot \mathrm{Area}(\Sigma^{j_p}_1)}\,$, and then we have
        \begin{equation}
            l \leq 2 \mathcal{N}(p) + 1\,.
        \end{equation}
    \end{lemma}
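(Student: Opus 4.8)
The plan is to separate $\set{\Gamma_k}^l_{k=1}$ into its stable and its unstable members and bound each part, in each case by exhibiting, in a complementary region, a stable minimal hypersurface counted by $\mathcal{N}(p)$. Every $\Gamma_k$ is a connected, closed, two-sided minimal hypersurface in $N_{j_p}\setminus\partial N_{j_p}\subset M$ with $\mathrm{Area}(\Gamma_k)\le (p+1)\,\mathrm{Area}(\Sigma^{j_p}_1)$; hence those $\Gamma_k$ that are stable are among the hypersurfaces counted by $\mathcal{N}(p)$, so at most $\mathcal{N}(p)$ of them are stable and it remains to bound the number of unstable ones.

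For that, consider the connected components of $N_{j_p}\setminus\bigcup_k\Gamma_k$ and the dual graph $G$ with one vertex per component and one edge per $\Gamma_k$, joining the (at most two) components it bounds; $G$ is connected because $N_{j_p}$ is. The geometric point is that an unstable two-sided minimal hypersurface has, on each side, an expanding foliation whose leaves have mean curvature vector pointing away from it, while a stable one---one of the $\Gamma_k$ or a boundary component $\Sigma^{j_p}_i$---has a contracting foliation. Given a component $\Omega$ with an unstable $\Gamma_k$ on its boundary, push slightly inward off the expanding leaves to produce a compact domain $\Omega'\subset\Omega$ that is strictly mean-convex along the unstable boundary pieces, and minimize $\mathbb{Z}_2$-mass in $\Omega'$ in the homology class of such a piece (in the degenerate case where that class is trivial, run a one-parameter min-max in $\Omega'$ and iterate along expanding neighborhoods while the output stays unstable, the process terminating because $N_{j_p}$ has only finitely many minimal hypersurfaces below a fixed area and index bound). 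The strict barriers and White's maximum principle keep the resulting minimal hypersurface off the unstable boundary pieces, so it lies either in the interior of $N_{j_p}$---where Lemma \ref{lem:area} gives area $\ge\mathrm{Area}(\Sigma^{j_p}_1)>0$ and a competitor built from the chosen boundary piece gives area $\le(p+1)\,\mathrm{Area}(\Sigma^{j_p}_1)$, so it is stable and counted by $\mathcal{N}(p)$---or runs off onto $\partial N_{j_p}$, in which case it contains one of the $\Sigma^{j_p}_i$, again counted by $\mathcal{N}(p)$. Assigning to each component the resulting hypersurface yields a map into the finite list counted by $\mathcal{N}(p)$: interior outcomes of distinct components lie in disjoint interiors, and each $\Sigma^{j_p}_i$ lies in the closure of a unique component, so the map is injective away from a controlled two-to-one overlap over the stable $\Gamma_k$'s (which each lie in the closures of at most two components). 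Plugging these bounds on the number of vertices---and the analogous bound on the first Betti number of $G$, since a non-separating $\Gamma_k$ forces an extra independent class realized by yet another counted stable hypersurface---into the identity $\#\{\text{edges}\}=\#\{\text{vertices}\}-1+b_1(G)$ gives $l\le 2\mathcal{N}(p)+1$.

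The step I expect to be the main obstacle is this production of a counted stable hypersurface in an ``unstably bounded'' region. One must prevent the minimization (or min-max) from collapsing to the empty hypersurface---handled by keeping a homologically nontrivial class and invoking Lemma \ref{lem:area}, which forbids interior minimal hypersurfaces of $N_{j_p}$ with area below $\mathrm{Area}(\Sigma^{j_p}_1)$---and from drifting onto the mean-convex part of $\partial\Omega'$, which the strict barriers along the unstable $\Gamma_k$ rule out. Once the production is secured, the rest is the combinatorial accounting---how the interior stable hypersurfaces, the boundary components $\Sigma^{j_p}_i$, the stable members of the family, and the independent classes of $G$ are distributed---needed to pin the constant down to exactly $2\mathcal{N}(p)+1$.
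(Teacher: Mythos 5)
Your proposal shares the paper's key geometric mechanism -- area minimization in a $\mathbb{Z}_2$-homology class inside a piece of $N_{j_p}\setminus\bigcup_k\Gamma_k$, with the maximum principle pushing the minimizer off the unstable boundary pieces, yielding a stable hypersurface counted by $\mathcal{N}(p)$ -- but the combinatorial accounting is both different from the paper's and not actually carried out, and as sketched it does not give the stated constant.

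The paper's accounting is: letting $\tilde N_1,\dots,\tilde N_Q$ be the components of $N_{j_p}\setminus\bigcup_k\Gamma_k$ and $C_t$ the number of boundary components of $\tilde N_t$, it proves by induction on the number of \emph{unstable} boundary pieces of $\tilde N_t$ that the metric completion of each $\tilde N_t$ contains at least $C_t-1$ stable hypersurfaces below the area threshold. Since every such stable hypersurface in $M$ can appear in at most two of the metric completions, $\sum_t (C_t-1)\le 2\mathcal{N}(p)$, and together with $\sum_t C_t\ge 2l$ and $Q\le l+1$ this gives $l-1\le 2\mathcal{N}(p)$. Your scheme instead produces \emph{one} stable hypersurface per component and then tries to close the argument via $l=Q-1+b_1(G)$. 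That is not enough: one hypersurface per component only controls $Q$ (roughly $Q\lesssim 2\mathcal{N}(p)$), and you still need to bound $b_1(G)$, which can be arbitrarily large. You gesture at ``a non-separating $\Gamma_k$ forces an extra independent class realized by yet another counted stable hypersurface,'' but you neither carry this out nor explain why these are distinct from the per-component outputs, so the final inequality $l\le 2\mathcal{N}(p)+1$ is not actually derived. The paper's per-component bound of $C_t-1$ (rather than $1$) is exactly what makes the count close.

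Two further issues. First, the min-max fallback when the homology class is trivial is unnecessary and problematic: the class $[\Gamma_k]\in H_n(\tilde N_t;\mathbb{Z}_2)$ of a boundary piece is trivial only when $\tilde N_t$ has a single boundary component, i.e.\ $C_t=1$, in which case the paper's claim requires producing \emph{zero} stable hypersurfaces, so no construction is needed there; moreover a width estimate for a one-parameter min-max in a mean-convex domain with unstable boundary would not automatically yield the area bound $\le(p+1)\,\mathrm{Area}(\Sigma^{j_p}_1)$ that you need. Second, your ``separate stable from unstable $\Gamma_k$'' framing mixes two counting schemes: the stable $\Gamma_k$'s you count once against $\mathcal{N}(p)$, but they are also the only possible source of two-to-one overlap in your component-to-hypersurface map, so they end up double-booked in the budget; this needs to be reconciled carefully rather than split off at the start. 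To repair the argument you would essentially have to upgrade your production step to yield $C_t-1$ stable hypersurfaces per component -- at which point you would be reproving the paper's inductive claim.
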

    \begin{proof}
        Consider $\tilde N = N_{j_p} \backslash \bigcup^l_{k = 1} \Gamma_k$ which can be written as a disjoint union of $\set{\tilde N_t}^Q_{t = 1}\,$, where $(\tilde N_t, \partial \tilde N_t)$ is a connected submanifold with boundary. Note that $Q \leq l + 1\,$, since each cut would produce at most one more connected component. 

        Let $C_t = \#\set{\text{connected components of }\partial \tilde N_t}$ and it is apparent that
        \begin{equation}
            \sum^Q_{t = 1} C_t \geq 2l\,.
        \end{equation}

        Since each two-sided stable minimal hypersurface in $M$ will be counted at most twice, the claim below implies
        \begin{equation}
            2\mathcal{N}(p) \geq \sum^Q_{t = 1} (C_t - 1) = \sum^Q_{t = 1} C_t - Q \geq l - 1\,,
        \end{equation}
        where the last inequality follows from $Q \leq l + 1$.

        \begin{claim}
             For each $\tilde N_t\,$, there are at least $(C_t - 1)$ two-sided stable minimal hypersurfaces in the metric completion of $\tilde N_t$ with area no greater than $(p+1)\cdot \mathrm{Area}(\Sigma^{j_p}_1)\,$.
        \end{claim}
        Indeed, the proof is similar to that of Claim 2.3 in \cite{chodosh_minimal_2019}, but for completeness, we will present it here.
        \begin{proof}[Proof of Claim]
            Let's prove it inductively on the number $n_t$ of unstable components of $\partial \tilde N_t$.

            If $n_t = 0$ or $1$, then the claim holds trivially.

            Let $k \in \mathbb{N}^+$, and suppose that the claim holds for any $n_t \leq k$. Now let's consider the case $n_t = k + 1$, and denote the components of $\partial \tilde N_t$ as $\Gamma_1, \cdots, \Gamma_{n_t}$ which are unstable and $\Gamma'_{n_t+1}, \cdots, \Gamma'_{C_t}$ which are stable. Note that $H_n(\tilde N_t; \mathbb{Z}_2)\ni[\Gamma_{1}] \neq 0$ since $C_t \geq n_t \geq 2\,$.
            
            For the unstable component $\Gamma_1$, we can find $\Gamma'_1 \in [\Gamma_1] \neq 0$ minimizing area in the homology class, each of whose components by maximum principle is either in the interior of $\tilde N_t$ or one stable component of $\partial \tilde N_t\,$. We may denote the number of the components of $\Gamma'_1$ in the interior by $m_t$ and the number of those on the boundary by $l_t$. Moreover, there exists a closed set $\Omega_1 \subset \tilde N_t$ with $\partial (\tilde N_t\backslash \Omega_1) = \Gamma'_1 \cup \Gamma_1\,$. Note that the number of connected components of $\Omega_1$ is no greater than $m_t$, and the total number of connected components of $\partial \Omega_1$ is exactly $C_t - 1 - l_t + m_t$.

            By induction, the claim holds for each component of $\Omega_1$, so the total number of connected stable minimal hypersurfaces with the area upper bound in $\Omega_1$ is at least
            \begin{equation}
                (C_t - 1 - l_t + m_t) - m_t = C_t - 1 - l_t\,,
            \end{equation}
            and thus, the number of those inside the metric completion of $\tilde N_t$ with the area upper bound is at least
            \begin{equation}
                (C_t - 1 - l_t) + l_t = C_t - 1\,.
            \end{equation}
        \end{proof}
    \end{proof}

    By Theorem \ref{thm:conf_minmax}, for any $P \in \mathbb{N}$ large enough, there exists a union of smooth, connected, closed, embedded, two-sided minimal hypersurfaces $\Gamma^P_1, \cdots, \Gamma^P_{l_P}$ contained in $N_{j_p}\backslash \partial N_{j_p}$ such that
        \begin{equation}
            \begin{aligned}
                \omega_P(\mathcal{C}(N)) &= \sum^{l_P}_{k = 1} \mathrm{Area}(\Gamma^P_k)\,,\\
                P &= \sum^{l_P}_{k = 1} \mathrm{index}(\Gamma^P_k)\,.
            \end{aligned}
        \end{equation}

    By Lemma \ref{lem:area}, we know that $\mathrm{Area}(\Gamma^P_k) \geq \mathrm{Area}(\Sigma^{j_p}_1) \geq p$ for any $k\,$. Moreover, by Lemma \ref{lem:num}, we also know that the number of minimal hypersurfaces in $\set{\Gamma^P_k}$ above with area no greater than $(p+1)\cdot \mathrm{Area}(\Sigma^{j_p}_1)$ is no greater than $2 \mathcal{N}(p) + 1$, and hence, we can obtain an upper bound on $l_P$, i.e.,
    \begin{equation}
        l_P \leq 2 \mathcal{N}(p) + 1 + \frac{\omega_P}{(p+1)\cdot \mathrm{Area}(\Sigma^{j_p}_1)} \leq 2 \mathcal{N}(p) + 1 + \frac{P}{p + 1} + \frac{\tilde C P^{\frac{1}{n+1}}}{p + 1}\,.
    \end{equation}
    
    By the pigeonhole principle, we know that there exists a $\Gamma^P_{k'}$ with
    \begin{equation}
        \begin{aligned}
            \mathrm{index}(\Gamma^P_{k'}) &\geq \frac{P}{l_P}\\
                & \geq \frac{P}{2 \mathcal{N}(p) + 1 + \frac{P}{p + 1} + \frac{\tilde C P^{\frac{1}{n+1}}}{p+1}}\\
                & = \frac{p+1}{\frac{2(p+1) \mathcal{N}(p) + p+1}{P} + 1 + \tilde C P^{-\frac{n}{n+1}}}\\
                & \geq p\,,
        \end{aligned}
    \end{equation}
    when $P$ is large enough. 
    
    In this case, we now can define $\Gamma_p := \Gamma^P_{k'}$ chosen above.
    
    $\,$\\

    In summary, in each case, we obtain a sequence of connected two-sided minimal hypersurfaces with
    \begin{equation}
        \lim_{p \rightarrow \infty} \mathrm{Area}(\Gamma_p) = +\infty\,, \quad \lim_{p \rightarrow \infty} \mathrm{index}(\Gamma_p) = +\infty\,.
    \end{equation}\\

\appendix

\section{X. Zhou's Multiplicity One Theorem}\label{app:A}
    
    The strategy of X. Zhou's proof lies upon the approximation of minimal hypersurfaces by a sequence of min-max prescribed mean curvature (PMC) hypersurfaces. Roughly speaking, in a bumpy closed Riemannian manifold, for each $p$-width, he constructs a sequence of PMC hypersurfaces converging to a $p$-width minimal hypersurface $\Sigma$. Analysis on such a convergence implies that if the multiplicity of the convergence is greater than one, then either one could get a non-trivial Jacobi field, or a positive function $\varphi$ satisfying the PDE,
    \begin{equation}
        L_{\Sigma} \varphi = \mathfrak{h}\,,
    \end{equation}
    where $L_\Sigma$ is the Jacobi operator and $\mathfrak{h}$ is the prescribed function. The first case can be excluded by the bumpiness while the second case is also excluded by choosing an appropriate $\mathfrak{h}$ such that the PDE has no positive solution. Therefore, one is able to show that the $p$-width minimal hypersurface should has multiplicity one. Note that the construction of min-max PMC hypersurfaces is based on the work by X. Zhou and J. Zhu \cite{zhou_existence_2018}.

    However, the results above are all in the setting of a closed Riemmanian manifold, and here we need to deal with $(N_\varepsilon, \partial N_\varepsilon)$ which is a Riemannian manifold with boundary. Therefore, one would expect to resolve this issue by directly adapting their methods in the free boundary min-max setting but it seems that the analysis on the touching set near the boundary, if possible, would require tedious work on this adaption. In fact, in our setting, the multiplicity one theorem could be proved without such a general theory.

    Firstly, for $\varepsilon>0$ small enough, we may view $(N_\varepsilon, \partial N_\varepsilon)$ as an subset of some closed smooth Riemannian manifold $\tilde N_\varepsilon$ and then choose $\mathfrak{h}$ defined in $\tilde N_\varepsilon$ such that all minimal hypersurfaces with area bounded by $(\omega_p(\mathcal{C}(N), h)+1)$ and Morse index bounded by $p$ contained in the interior of $N_\varepsilon$ (they are finitely many) satisfy the sign-changing property as in Lemma 4.2 in \cite{zhou_multiplicity_2019}. Meanwhile, $\mathfrak{h}\in \mathcal{S}(\tilde N_\varepsilon)\,$.

    Second, define a cut-off function $\eta$ supported in the interior of $N_\varepsilon$ such that
    \begin{equation}
        \eta(x) \equiv 1\,, \text{ if } \,\mathrm{dist}_{h_\varepsilon}(x, \partial N_\varepsilon) \geq 1\,,
    \end{equation}
    and define $\tilde{\mathfrak{h}} = \mathfrak{h} \cdot \eta$ on $N_\varepsilon$.

    Now, on $(N_\varepsilon, \partial N_\varepsilon)\,$, given any associated $(X, Z)$-homotopy class $\Pi$ of $\Phi_0$ with
    \begin{equation}
        \omega_p(\mathcal{C}(N),h) + 1 \geq \mathbf{L}(\Pi) > \max_{x \in Z} \mathbf{M}(\partial \Phi_0(x)),   
    \end{equation}
    we would follow the proof of Theorem 3.6 in \cite{zhou_multiplicity_2019} with prescribed mean curvature $\delta \cdot \tilde{\mathfrak{h}}$ where the number $\delta > 0$ is small enough. Note that for a pulled-tight min-max sequence $S = \{\Phi_l\} \in \Pi$, any varifold $V \in \mathbf{C}(S)$ is $\mathcal{A}^{\delta\tilde{\mathfrak{h}}}$-stationary. 

    Since $\mathfrak{h}\equiv 0$ near $\partial N_\varepsilon$, the regularity for almost-minimizing free boundary min-max minimal hypersurfaces in Li-Zhou \cite{li_min-max_2016} implies that there exists a $V_\delta \in \mathcal{C}(S)\,$, $\mathrm{spt}(V_\delta) \cap \set{x:\mathrm{dist}_{h_\varepsilon}(x, \partial N_\varepsilon) < 1/2}$ is either an empty set or a smooth hypersurface. Moreover, in the interior of $\set{x: \mathrm{dist}_{h_\varepsilon}(x, \partial N_\varepsilon) > 1}\,$, such a varifold $V_\delta$ is $\delta\mathfrak{h}$-almost minimizing in small annuli, since $\tilde{\mathfrak{h}}$ and $\mathfrak{h}$ coincide there. Hence, by the maximum principle by B. White \cite{white_maximum_2009} and monotonicity formulae, if we fix $\varepsilon$ small enough and take $\delta$ even smaller, $V_\delta$ should be contained in the interior of $\set{x: \mathrm{dist}_{h_\varepsilon}(x, \partial N_\varepsilon) > 1}\,$, since otherwise, the area of $V_\delta$ will be greater than $(\omega_p(\mathcal{C}(N),h) + 1)\,$. Thus, the proof of Theorem 3.6 in \cite{zhou_multiplicity_2019} implies that $V_\delta$ is a smooth, closed, multiplicity one, properly almost embedded hypersurface with Morse index bounded above by $p$ as boundary of a Cacciopoli set.

    Finally, we can follow Theorem 4.1 in \cite{zhou_multiplicity_2019}, taking $\delta \rightarrow 0$, and obtain an embedded closed multiplicity one minimal hypersurface with area $\omega_p(N_\varepsilon,h_\varepsilon)$ and Morse index bounded by $p$ in the interior of $(N_\varepsilon, \partial N_\varepsilon)$.

\bibliography{reference}
\end{document}